\documentclass[11pt]{amsart}
\usepackage{enumerate}
\usepackage{graphicx}
\usepackage{amssymb}
\usepackage{xcolor}

\sloppy
% \flushbottom

\newtheorem{theorem}{Theorem}[section]
\newtheorem{lemma}[theorem]{Lemma}
\newtheorem{definition}[theorem]{Definition}

\newtheorem{corollary}[theorem]{Corollary}
\newtheorem{remark}[theorem]{Remark}
\newtheorem{question}{Question}

\newcommand{\comm}[1]{}
\newcommand{\beqa}{\begin{eqnarray*}}
\newcommand{\eeqa}{\end{eqnarray*}}
\newcommand{\beqn}{\begin{eqnarray}}
\newcommand{\eeqn}{\end{eqnarray}}
\newcommand{\e}{\varepsilon}
\newcommand{\del}{\delta}
\newcommand{\Del}{\Delta}
\newcommand{\ol}{\overline}

\newcommand{\B}{\mathcal{B}}
\newcommand{\N}{\mathbb{N}}
\newcommand{\A}{\mathcal{A}}
\newcommand{\Z}{\mathcal{Z}}

\newcommand{\ds}{\displaystyle}

\newcounter{cnt1}
\newcounter{cnt2}
\newcounter{cnt3}
\newcounter{cnt4}
\newcommand{\blr}{\begin{list}{$($\roman{cnt1}$)$}
{\usecounter{cnt1} \setlength{\topsep}{0pt}
\setlength{\itemsep}{0pt}}}
\newcommand{\bla}{\begin{list}{$($\alph{cnt2}$)$}
{\usecounter{cnt2} \setlength{\topsep}{0pt}
\setlength{\itemsep}{0pt}}}
\newcommand{\bln}{\begin{list}{$($\arabic{cnt3}$)$}
{\usecounter{cnt3} \setlength{\topsep}{0pt}
\setlength{\itemsep}{0pt}}}
\newcommand{\blR}{\begin{list}{$($\Roman{cnt4}$)$}
{\usecounter{cnt4} \setlength{\topsep}{0pt}
\setlength{\itemsep}{0pt}}}
\newcommand{\el}{\end{list}}

% Blank box placeholder for figures (to avoid requiring any
% particular graphics capabilities for printing this document).

\begin{document}

\title[(A)US, ball separation and residuality]{(Asymptotic) uniform smoothness, ball separation and residuality results}

\author[Bandyopadhyay]{Pradipta Bandyopadhyay}
\address[Pradipta Bandyopadhyay]{Stat--Math Division,
Indian Statistical Institute, 203, B.~T. Road, Kolkata
700108, India.}
\email{pradipta@isical.ac.in}

\author[Gothwal]{Deepak Gothwal}
\address[Deepak Gothwal]{Stat--Math Division, Indian Statistical Institute, 203, B.~T. Road, Kolkata
700108, India.}
\email{deepakgothwal190496@gmail.com}

\subjclass[2020]{Primary 46B20}

\date{\today}

\keywords{AUS norms, Ball Separation, Residuality, Asymptotic w*-denting, Uniform Smoothness, UMIP}

\begin{abstract}
In this article, we discuss a ball separation characterisation of asymptotically uniformly smooth (AUS) norms. We use this characterisation to prove the residuality of the set of equivalent AUS norms. We discuss similar residuality results for uniformly smooth norms and norms with uniform Mazur intersection property (UMIP).
\end{abstract}

\maketitle

\section{Introduction}

In geometry of Banach spaces, one often encounters properties that are isometric, and not isomorphic. That is, they depend on the specific norm and are not shared by all equivalent norms. However, among these isometric properties, there are some that satisfy the following: If a Banach space $X$ has one norm with property $P$, then ``almost every'' (residual with respect to some suitable metric) equivalent norm on $X$ also has property $P$. For example, Fabian et al. in \cite{FZZ}, proved that rotundity, locally uniform rotundity and uniform rotundity as well as having duals with these properties are such ``residual'' properties (see also \cite[Section II.4]{DGZ}).

In this note, we prove the residuality of certain geometric properties by identifying them as ball separation properties.

The study of ball separation properties in Banach spaces began with Mazur. He studied what is now called the Mazur intersection property (MIP): every closed bounded convex set in $X$ is the intersection of closed balls containing it. Among many characterisations of MIP \cite[Theorem 2.1]{GGS}, the most widely used one states: w*-denting points of $B(X^*)$ are norm dense in $S(X^*)$.

MIP is an extensively studied property with various generalisations and variations. For instance, one stronger version of MIP is the uniform Mazur intersection property (UMIP), introduced by Whitfield and Zizler in \cite{WZ} and more recently studied in \cite{BGG, Go}, which will be defined later (Definition~\ref{UMIP}). Another strengthening is obtained by considering the separation from a hyperplane. We call it the hyperplane Mazur intersection property (H-MIP) (See \cite[Definition 4.1]{Go}) which is characterised as all points of $S(X^*)$ being w*-denting points of $B(X^*)$ \cite[Theorem 4.6]{Gi}. In \cite{Go}, the second named author defined a uniform version of H-MIP (termed as H-UMIP) and showed that it characterises uniform smoothness.

Georgiev \cite{Ge} proved the residuality of MIP norms. From \cite[Corollary 5.7]{CL}, it follows that H-MIP is also residual.

In this paper, we obtain the residuality of UMIP and H-UMIP, and hence of uniform smoothness. Even though the latter can be derived from the results in \cite{FZZ}, we believe that our proof is new.

Another property we study in this paper is the asymptotic notion of uniform smoothness. Asymptotic uniform convexity and smoothness (defined in Section 2) have received a lot of attention in the literature of Banach space theory. \cite{JLPS} is a good source of information where the authors' concern is the existence of points of $\e$-Fr\'echet differentiability of Lipschitz maps. Authors in \cite{GKL} and \cite{KOS} also spend a good deal of time on the property UKK* (see the above references for definition) which is equivalent to w*-AUC. Their focus is mainly on renormings of Banach spaces with UKK* in connection with Szlenk index. Raja in \cite{Ra2} deals with existence of an AUS norm using an ordinal type of index. Other than these, \cite{DKLR, MT, MMT, OS} are some of the important works in the study of AUS.

In this note, we characterise AUS in terms of a ball separation property similar to \cite[Theorem 4.7]{Go}. We again make use of the connection between residuality and ball separation to obtain residuality of AUS norms. \cite[Remark 4.5]{DKLR} observed the residuality of AUC norms and from it, residuality of AUS norms in reflexive spaces. We prove this for general Banach spaces, not merely reflexive. As a corollary, we extend their result on existence of a norm with both AUS and AUC to general Banach spaces.

Indeed, residuality results of \cite{FZZ} are essentially obtained for convexity properties on $X$ or $X^*$ and are translated to smoothness properties on $X$ via duality, which is often not a complete duality and hence, needs reflexivity. Our results are for smoothness properties on $X$ itself via ball separation.

In the final section, we touch upon two open questions and possible approaches to answer them.

First, the residuality of Fr\'echet smoothness seems to be open. We obtain a ball separation characterisation of Fr\'echet smoothness. We hope that the techniques presented in this article would be useful in obtaining the residuality of Fr\'echet smooth norms.

Also, another long-standing open question is: ``Does UMIP imply super-reflexivity?'' Now, consider the following isometric question: If $X$ has both H-MIP and UMIP, does $X$ have H-UMIP? We observe that if the answer to the above question is affirmative, then the above open problem also has a positive answer for separable spaces.

\section{Preliminaries}
In this section, we discuss necessary definitions and terminologies.

We consider real Banach spaces only. Let $X$ be a Banach space and $X^*$ its dual. For $x\in X$ and $r>0$, we denote by $B(x, r)$ \emph{the open ball} $\{y\in X : \|x-y\|<r\}$ and by $B[x, r]$ \emph{the closed ball} $\{y\in X: \|x-y\|\leq r\}$ in $X$. We denote by $B(X)$ the \emph{closed unit ball} $\{x\in X : \|x\| \leq 1\}$ and by $S(X)$ the \emph{unit sphere} $\{x \in X : \|x\| = 1\}$.

For $x\in S(X)$, let $D(x) = \{f\in S(X^*) : f(x)=1\}$. The multivalued map $D$ is called the duality map. And we denote by $NA = \cup \{D(x) : x \in S(X)\}$, the set of all norm attaining functionals.

Consider the following quantities which find their origin in the work of Milman in \cite{Mi} under different notation.
\[
\ol{\Delta}_{X}(\e) = \inf_{x \in S(X)}\sup_{Y \subseteq X, \, dim (X/Y) <\infty}\inf_{y \in Y, \, \|y\| = \e}(\|x+y\|-1)
\] and
\[\ol{\rho}_{X}(\e) = \sup_{x \in S(X)} \inf_{Y \subseteq X, \, dim (X/Y) <\infty} \sup_{y \in Y, \, \|y\| = \e}(\|x+y\|-1)
\]
In the words of authors of \cite{JLPS}, $X$ is said to be \textit{asymptotically uniformly convex (AUC)} if for every $\e >0$, $\ol{\Delta}_{X}(\e) > 0$ and \textit{asymptotically uniformly smooth (AUS)} if $\lim_{\e \rightarrow 0}\frac{\ol{\rho}_{X}(\e)}{\e} \rightarrow 0$.
If we consider these notions in the dual space $X^*$ and the finite co-dimensional subspaces $Y$ are w*-closed, then we obtain the notions of w*-AUC and w*-AUS, respectively. We denote the modulus of w*-AUC and w*-AUS by $\ol{\Delta}_{X^*}^*(\cdot)$ and $\ol{\rho}_{X^*}^*(\cdot)$, respectively. We may skip the notation for the space in the subscript if there is no ambiguity.

For a subspace $Z$ of $X^*$, we define
\[
\|x\|_{Z} =: \sup_{f \in B(Z)} |f(x)|,
\mbox{ for all } x \in X.
\]

For a set $C \subseteq X$, a subspace $Z$ of $X^*$ and $x \in X$, let \bla
\item $d_{Z}(x, C) = \inf\{\|x-z\|_{Z} : z \in C\}$;
\item $diam_{Z}(C) = \sup\{\|x-y\|_{Z} : x, y \in C\}$;
\item $B_{Z}[x, r]:=\{y \in X : \|y-x\|_{Z} \leq r\}$ and
\item $B_{Z}(x, r):=\{y \in X : \|y-x\|_{Z} < r\}$
\el
We drop the symbol in the subscript if $Z=X^*$.

A \emph{slice} of $B(X)$ determined by $f\in X^*$ and $0<\alpha < 1$ is a set of the form
\[
S(B(X), f, \alpha):=\{y \in B(X): f(x) >\alpha\}.
\]
Observe that this notation is slightly different from the standard notation of a slice. For $x\in X$, $S(B(X^*), x, \alpha)$ will be called a \emph{w*-slice of $B(X^*)$}.

\begin{definition} \rm
We say that $x\in S(X)$ is a \emph{denting point} of $B(X)$ if
for every $\e > 0$, $x$ is contained in a slice of $B(X)$ of diameter less than $\e$.

If, in the above definition, for all $\e >0$, the same functional $f$ determines the slices of diameter less than $\e$, then $x$ is called a strongly exposed point and $f$ is called the strongly exposing functional.

A w*-denting point or a w*-strongly exposed point of $B(X^*)$ is defined analogously using w*-slices.
\end{definition}

Let $\B$ be the family of unit balls determined by the set of norms on $X$ equivalent to the original norm $\|\cdot\|$. For $B \in \B$, let us denote the corresponding norm on $X$ and $X^*$ by $\|\cdot\|_B$ and $\|\cdot\|^*_B$ respectively. Let $h$ be the Hausdorff metric on $\B$, \emph{i.e.}, for $B_1, B_2 \in \B$,
\[
h(B_1, B_2):= \inf\{\e >0: B_1 \subseteq B_2 + \e B_2 \mbox{ and } B_2 \subseteq B_1 + \e B_1\}.
\]
It is well known that $(\B, h)$ is a complete metric space. This notion of metric is motivated from the metric defined in \cite[Section 5]{CL}. This is slightly different from the metric considered for residuality results in \cite{DGZ, FZZ}. However, they are equivalent \cite{Ge}.

Any unexplained terminology or results can be found in \cite{DGZ} or \cite{Me}.

\section{AUD characterises AUC}

We begin this section with the duality between AUS and w*-AUC which follows from \cite[Proposition 2.1]{DKLR}. We will use this relationship while proving the ball separation characterisation of AUS.
\begin{theorem} \label{AUCAUS}
$X$ is AUS if and only if $X^*$ is w*-AUC.
\end{theorem}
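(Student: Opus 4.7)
The plan is to deduce the equivalence from the Young-type duality between the asymptotic moduli $\ol{\rho}_X$ and $\ol{\Delta}_{X^*}^*$ furnished by \cite[Proposition 2.1]{DKLR}. That result provides a Fenchel--Legendre identity of the form
\[
\ol{\rho}_X(\sigma) \;=\; \sup_{\tau > 0}\bigl(\sigma\tau - \ol{\Delta}_{X^*}^*(\tau)\bigr),
\]
equivalently, the two-sided Young inequality $\sigma\tau \leq \ol{\rho}_X(\sigma) + \ol{\Delta}_{X^*}^*(\tau)$ together with its sharpness. Granting this identity, the equivalence reduces to the standard fact that a nonnegative nondecreasing function $f$ with $f(0)=0$ satisfies $f(\sigma)/\sigma \to 0$ as $\sigma \to 0^+$ if and only if its Legendre conjugate is strictly positive on $(0,\infty)$.

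For the forward direction, assume $X$ is AUS and fix $\tau_0 > 0$. Choose $\sigma > 0$ so small that $\ol{\rho}_X(\sigma) < \sigma\tau_0/2$; the Young inequality then gives
\[
\ol{\Delta}_{X^*}^*(\tau_0) \;\geq\; \sigma\tau_0 - \ol{\rho}_X(\sigma) \;>\; \sigma\tau_0/2 \;>\; 0,
\]
which shows $X^*$ is w*-AUC. For the converse, assume $X^*$ is w*-AUC and fix $\e > 0$. The inequality $\|f+g\| \geq |\|g\|-\|f\||$ yields $\ol{\Delta}_{X^*}^*(\tau) \geq \tau - 2$, so for $\sigma < 1/2$ and $\tau \geq 4$ the quantity $\sigma\tau - \ol{\Delta}_{X^*}^*(\tau)$ is nonpositive; the supremum in the Legendre identity may therefore be restricted to $\tau \leq 4$. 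Splitting this range at $\e$: for $\tau \leq \e$ one has $\sigma\tau - \ol{\Delta}_{X^*}^*(\tau) \leq \sigma\e$; for $\e < \tau \leq 4$, monotonicity of $\ol{\Delta}_{X^*}^*$ gives $\ol{\Delta}_{X^*}^*(\tau) \geq \ol{\Delta}_{X^*}^*(\e) =: \delta > 0$, whence $\sigma\tau - \ol{\Delta}_{X^*}^*(\tau) \leq 4\sigma - \delta \leq \sigma\e$ as soon as $\sigma \leq \delta/(4-\e)$. Therefore $\ol{\rho}_X(\sigma)/\sigma \leq \e$ for all sufficiently small $\sigma > 0$, and $X$ is AUS.

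The main obstacle is essentially invoking the Young identity from \cite[Proposition 2.1]{DKLR} correctly; it rests on the polarity between finite co-dimensional subspaces of $X$ and w*-closed finite co-dimensional subspaces of $X^*$, together with a Hahn--Banach separation in $X^*$. Once the identity is granted, both implications follow by routine convex-duality bookkeeping combined with the triangle-inequality bounds $\ol{\rho}_X(\sigma) \leq \sigma$ and $\ol{\Delta}_{X^*}^*(\tau) \geq \tau - 2$ that control the supremum in the Legendre identity to a bounded range.
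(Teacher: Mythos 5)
Your proposal is correct and takes essentially the same route as the paper, which states the theorem as a direct consequence of \cite[Proposition 2.1]{DKLR} without further argument; you have simply filled in the standard Young--Legendre bookkeeping (including the bounds $\ol{\rho}_X(\sigma)\leq\sigma$ and $\ol{\Delta}_{X^*}^*(\tau)\geq\tau-2$ that confine the supremum to a bounded range) that the paper leaves implicit. The only point worth making explicit is the monotonicity of $\ol{\Delta}_{X^*}^*$, which requires a small argument since the modulus is defined via an infimum over the sphere $\|y^*\|=\tau$ rather than over $\|y^*\|\geq\tau$; the paper proves exactly this equality as the ``Claim'' inside Theorem~\ref{AUDAUC}.
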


Let us define an asymptotic notion of uniform denting introduced in \cite{Go}.
\begin{definition} \rm
We say that $X$ is asymptotically uniformly denting (I) (AUD (I)) if for every $0 <\e \leq 2$, there exists $\del(\e) >0$ such that for every $x \in S(X)$, there exists a finite co-dimensional subspace $Z$ of $X$ containing $x$ and $f \in S(X^*)$ such that
\[
diam (S(B(Z), f, f(x)(1-\del))) <\e.
\]

We say that $X^*$ is w*-AUD (I) if the finite co-dimensional subspace of $X^*$ is w*-closed and the functional defining the slice comes from $X$.
\end{definition}

Since we are going to use this notion frequently, let us denote by
\[
\Z =: \{Z \subseteq X^* : Z \mbox{ is a w*-closed finite co-dimensional subspace of } X^*\}.
\]

Following elementary lemma is an analogue of the Hahn-Banach theorem.
\begin{lemma} \label{lemext}
Let $Z \subseteq \Z$. If $u \in X$ and $\|u\|_Z=1$, then there exists $x \in S(X)$ such that $x|_Z = u|_Z$.
\end{lemma}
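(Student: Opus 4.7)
The plan is to reduce the statement to a best-approximation problem in a finite-dimensional subspace, where compactness gives us what we want.

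First I would identify the finite-dimensional subspace dual to the situation. Since $Z$ is a w*-closed subspace of $X^*$ of finite codimension, standard duality gives $Z = Y^{\perp}$ for some finite-dimensional subspace $Y \subseteq X$, with $\dim Y$ equal to the codimension of $Z$. Here $Y^{\perp} = \{f \in X^* : f|_Y = 0\}$.

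Next I would verify that the quotient $X/Y$ embeds isometrically into $Z^*$ via restriction. Concretely, for $v \in X$, Hahn--Banach (applied with $Y$ finite-dimensional, hence closed) gives
\[
\|v + Y\|_{X/Y} = \sup\{f(v) : f \in B(Y^{\perp})\} = \sup\{f(v) : f \in B(Z)\} = \|v\|_Z.
\]
Applied to our $u$, this says $\|u + Y\|_{X/Y} = \|u\|_Z = 1$, i.e.\ $\inf_{y \in Y}\|u + y\| = 1$.

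The key step is now to attain this infimum. The map $y \mapsto \|u+y\|$ is continuous on $Y$ and coercive (since $\|u+y\| \geq \|y\|-\|u\|$), so it attains its infimum on a closed ball in $Y$, which is compact because $Y$ is finite-dimensional. Pick $y_0 \in Y$ with $\|u + y_0\| = 1$ and set $x = u + y_0$. Then $\|x\| = 1$, and since $y_0 \in Y$ and $Z = Y^{\perp}$, we have $f(y_0) = 0$ for every $f \in Z$, so $x|_Z = u|_Z$, as required.

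The only potential obstacle is the identification $Z = Y^{\perp}$ for a finite-dimensional $Y \subseteq X$; this is standard once one knows that w*-closed subspaces of $X^*$ are annihilators of subspaces of $X$, and finite codimension on the $X^*$ side translates to finite dimension on the $X$ side. Everything else is elementary: Hahn--Banach to compute the quotient norm, and finite-dimensional compactness to attain the distance.
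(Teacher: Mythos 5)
Your proof is correct. The paper states this lemma without proof (calling it an elementary analogue of the Hahn--Banach theorem), so there is no argument to compare against; your route --- identifying $Z$ as the annihilator $Y^{\perp}$ of its finite-dimensional preannihilator $Y = Z_{\perp} \subseteq X$, recognising $\|\cdot\|_Z$ as the quotient norm on $X/Y$ via the duality $(X/Y)^* \cong Y^{\perp}$, and attaining the distance $\inf_{y \in Y}\|u+y\|=1$ by compactness of bounded sets in the finite-dimensional $Y$ --- is precisely the natural elementary argument the authors presumably had in mind, and every step checks out.
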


The following theorem relates w*-AUC and w*-AUD (I):
\begin{theorem} \label{AUDAUC}
$X^*$ is w*-AUD (I) if and only if $X^*$ is w*-AUC.
\end{theorem}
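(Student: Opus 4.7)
The plan is to establish both directions by relating the w*-AUC inequality at $f \in S(X^*)$ to the diameter of a slice of $B(Z)$ for carefully chosen $Z \in \Z$ and $x \in S(X)$.

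For w*-AUC $\Rightarrow$ w*-AUD (I), fix $\e \in (0,2]$ and use w*-AUC at the smaller scale $\e/8$ to obtain $\eta > 0$ and, for each $f \in S(X^*)$, a subspace $Y_f \in \Z$ satisfying $\|f+y\| \geq 1+\eta$ whenever $y \in Y_f$ and $\|y\| = \e/8$; the convexity of $t \mapsto \|f + ty\|$ extends this to all $y \in Y_f$ with $\|y\| \geq \e/8$. The key construction is to shrink $Y_f$ so that a prescribed norming direction lies in its pre-annihilator: pick $v \in S(X)$ with $v(f) > 1 - \nu$ for small $\nu$, and replace $Y_f$ by $Y_f' := \{y \in Y_f : v(y) = 0\} \in \Z$, which still satisfies the same w*-AUC estimate since the infimum is now over a smaller set. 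Put $x := v$ and $Z := Y_f' + \R f \in \Z$, so $f \in Z$. For $g$ in the slice $S(B(Z), x, x(f)(1-\delta))$, decompose $g = \lambda f + y$ with $y \in Y_f'$; since $x(y) = 0$, the slice condition becomes $\lambda x(f) > x(f)(1-\delta)$, i.e.\ $\lambda > 1-\delta$. Choosing $\delta$ small enough that $1/(1-\delta) < 1+\eta$ gives $\|f + y/\lambda\| = \|g\|/\lambda < 1+\eta$, so w*-AUC forces $\|y/\lambda\| < \e/8$. Combined with $\|g\|\leq 1$ this bounds $\lambda$ from above, and then $\|g-f\| \leq |\lambda-1| + \|y\|$ is small enough that the slice has diameter less than $\e$.

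For the converse, given $f \in S(X^*)$ and $\e \in (0,2]$, apply w*-AUD (I) at the smaller scale $\e_0 := \e/2$ to obtain $\delta_0 > 0$, $Z \in \Z$ containing $f$, and $x \in S(X)$ (necessarily with $x(f) > 0$, for otherwise the slice would contain an entire half of the infinite-dimensional ball $B(Z)$) with diam$(S(B(Z), x, x(f)(1-\delta_0))) < \e_0$. Put $Y := \{g \in Z : x(g) = 0\} \in \Z$. For $y \in Y$ with $\|y\| = \e$ one has $f+y \in Z$ and $x(f+y) = x(f)$; if $\|f+y\| \leq 1$ then both $f$ and $f+y$ lie in the slice, forcing $\|y\| < \e_0 < \e$, a contradiction. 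So $\|f+y\| = 1+\mu$ for some $\mu > 0$. If moreover $\mu < \delta_0/(1-\delta_0)$, the point $g := (f+y)/(1+\mu)$ lies in $B(Z)$ and satisfies $x(g) = x(f)/(1+\mu) > x(f)(1-\delta_0)$, so $g$ is in the slice; expanding $g - f = (y - \mu f)/(1+\mu)$ against the diameter bound yields $\mu > (\e - \e_0)/(1+\e_0)$. Either way, $\mu \geq \min\{\delta_0/(1-\delta_0),\ (\e-\e_0)/(1+\e_0)\} > 0$, giving a uniform lower bound for $\ol{\Delta}^*_{X^*}(\e)$.

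The main obstacle is the forward direction: one needs an $x \in S(X)$ that simultaneously nearly norms $f$ (so the slice is nondegenerate) and annihilates $Y_f$ (so the slice condition linearises to $\lambda > 1-\delta$ without an $x(y)$ error term). The resolution is the observation that w*-AUC is preserved under shrinking $Y_f$, so one is free to adjoin any prescribed norming direction to its pre-annihilator. The reverse direction is softer: a small-diameter slice directly precludes $\|f+y\| \leq 1$ for $y \in Y$ with $\|y\| = \e > \e_0$, and the quantitative excess $\mu$ is extracted by rescaling $(f+y)/(1+\mu)$ back into the slice.
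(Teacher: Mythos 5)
Your proof is correct and follows essentially the same route as the paper's: both directions rest on the correspondence between a finite-codimensional $Y \in \Z$ witnessing the w*-AUC estimate and the slice of $B(Z)$, $Z = Y \oplus \R f$, cut by a functional from $X$ that annihilates $Y$. The differences are only presentational: you fix the nearly-norming $v \in S(X)$ first and shrink $Y$ into its kernel (the paper instead keeps $Y$ and extracts a separating $x$ afterwards), you extend the w*-AUC estimate from $\|y\|=\e/8$ to $\|y\|\ge\e/8$ by convexity rather than by the paper's contradiction argument, and you carry out in-line the diameter and growth estimates that the paper delegates to \cite[Lemma 3.1]{DL} and \cite[Lemma 2.8]{Go}.
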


\begin{proof}
Let $X^*$ be w*-AUC and $0 <\e \leq 2$. We have, $\ol{\Del}^*(\e/8) >0$. Let $0 <\del <\ol{\Del}^*(\e/8)$.
So, for $x^* \in S(X^*)$, there exists $Y \in \Z$ such that
\[
\inf_{y^* \in Y, \, \|y^*\|=\e/8}\|x^*+y^*\|-1 >\del.
\]

\textsc{Claim}~: $\ds \inf_{y^* \in Y, \, \|y^*\| \geq \e/8} \|x^*+y^*\| = \inf_{y^* \in Y, \, \|y^*\|=\e/8}\|x^*+y^*\|$.

If not, then there exists $y_0^* \in Y$ with $\|y_0^*\| >\e/8$ such that
\[
\|x^*+y_0^*\| < \inf_{y^* \in Y, \, \|y^*\| = \e/8}\|x^*+y^*\| = A \mbox{ (say)}.
\]
Let $0 <\beta, \gamma <1$ be such that $\gamma <A-\|x^*+y_0^*\|$ and $\|\beta y_0^*\|=\e/8$.

Choose $x \in S(X)$ such that $(x^*+\beta y_0^*)(x) >\|x^*+\beta y_0^*\|-\min\{\gamma, \del/2\}$. So, we have $(x^*+\beta y_0^*)(x) >1 + \del/2$ which implies $y_0^*(x) >0$.
Also, $\|x^*+y_0^*\| < \|x^*+\beta y_0^*\|$.

We have $(x^*+y_0^*)(x) \leq \|x^*+y_0^*\| <\|x^*+\beta y_0^*\|-\gamma <(x^*+\beta y_0^*)(x)$. That is, $y_0^*(x) <\beta y_0^*(x)$. But, $0 <\beta <1$. So, $y_0^*(x) < 0$. This is a contradiction.

Therefore, $\inf_{y^* \in Y, \, \|y^*\| \geq \e/8}\|x^*+y^*\|-1 >\del$.

Clearly, $x^* \notin Y$. Let $Z = span(Y \cup \{x^*\})$. We have that, $Z$ is w*-closed. So, there exists $x \in S(X)$ such that $x^*(x) >0$ and $Y = \ker (x|_Z)$.

Now, by \cite[Lemma 3.1]{DL},
\[
diam (S(B(Z), x, x^*(x)(1-\del))) <\e.
\]
So, $X^*$ is w*-AUD (I).

Conversely, let $X^*$ be w*-AUD (I) and $0 <\e \leq 2$. Let $\del=\del(\e/5)$ obtained from the hypothesis.

Let $\beta:=\min\left\{\frac{\del}{1-\del}, \frac{\e}{20}\right\}$.

For any $x^* \in S(X^*)$, there is $Z \in \Z$ containing $x^*$ and $x \in S(X)$ such that
\[
diam (S(B(Z), x, x^*(x)(1-\del))) <\e/5.
\]
Let $Y=Z \cap \ker(x)$. Clearly, $Y \in \Z$. And $Y=\ker(x|_{Z})$.

Thus, by slight modification in the arguments of \cite[Lemma 2.8]{Go},
\[
\inf_{y^* \in Y, \, \|y^*\| = \e}\|x^*+y^*\|-1 \geq \inf_{y^* \in Y, \, \|y^*\| \geq \e}\|x^*+y^*\|-1 >\beta.
\]

So, $\ol{\Delta}^*(\e) \geq \beta >0$.
Hence, $X^*$ is w*-AUC.
\end{proof}

Analogous to the above theorem, we have the following results relating AUC and AUD (I).
\begin{theorem}
$X$ is AUC if and only if $X$ is AUD (I).
\end{theorem}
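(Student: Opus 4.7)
The statement is the non-dual companion of Theorem~\ref{AUDAUC}, and my plan is to mimic that proof almost verbatim, with the dual space $X^*$ replaced by $X$, w*-closed finite co-dimensional subspaces replaced by arbitrary finite co-dimensional subspaces of $X$, and functionals in $X$ replaced by functionals in $X^*$. The roles of w*-closedness in the earlier proof are played here by Hahn--Banach, which is why the argument is actually a little simpler.

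For the forward direction, assume $X$ is AUC and fix $0 < \e \leq 2$. Since $\ol{\Delta}(\e/8) > 0$, pick $0 < \del < \ol{\Delta}(\e/8)$, and for any $x \in S(X)$ obtain a finite co-dimensional $Y \subseteq X$ with $\inf_{y \in Y,\, \|y\|=\e/8}(\|x+y\|-1) > \del$. The first substantive step is the analog of the claim in the proof of Theorem~\ref{AUDAUC}: one shows
\[
\inf_{y \in Y,\, \|y\| \geq \e/8} \|x+y\| \;=\; \inf_{y \in Y,\, \|y\|=\e/8} \|x+y\|,
\]
by the same contradiction: if some $y_0 \in Y$ with $\|y_0\| > \e/8$ gave a strictly smaller value, take $0 < \beta < 1$ with $\|\beta y_0\| = \e/8$, use a near-supporting functional (now an $f \in X^*$) separating $x + \beta y_0$ from $B(X)$, and derive $y_0(x) < 0$ and $y_0(x) > 0$ simultaneously --- exactly the dual argument with arrows reversed. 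Next, since $y = -x \in Y$ would contradict the inequality, $x \notin Y$, so $Z := \mathrm{span}(Y \cup \{x\}) = Y \oplus \R x$ is finite co-dimensional and contains $x$. Define $g \in Z^*$ by $g|_Y = 0$, $g(x) = 1$, extend via Hahn--Banach to $\tilde f \in X^*$, and set $f := \tilde f/\|\tilde f\|$; then $f \in S(X^*)$, $f(x) > 0$, and $Y = \ker(f|_Z)$. Invoking \cite[Lemma 3.1]{DL} (applied inside the Banach space $Z$) yields $\mathrm{diam}(S(B(Z), f, f(x)(1-\del))) < \e$, which is AUD (I).

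For the converse, assume AUD (I) and let $0 < \e \leq 2$. Pick $\del = \del(\e/5)$ from the definition, set $\beta := \min\{\del/(1-\del),\ \e/20\}$, and for $x \in S(X)$ obtain $Z \ni x$ finite co-dimensional in $X$ and $f \in S(X^*)$ with $\mathrm{diam}(S(B(Z), f, f(x)(1-\del))) < \e/5$. Setting $Y := Z \cap \ker(f)$ gives a finite co-dimensional subspace of $X$ (with $Y = \ker(f|_Z)$), and then the slight modification of \cite[Lemma 2.8]{Go} used in the proof of Theorem~\ref{AUDAUC} gives
\[
\inf_{y \in Y,\, \|y\|=\e}(\|x+y\|-1) \;\geq\; \inf_{y \in Y,\, \|y\| \geq \e}(\|x+y\|-1) \;>\; \beta.
\]
Taking the supremum over $Y$ and then the infimum over $x \in S(X)$ gives $\ol{\Delta}(\e) \geq \beta > 0$, so $X$ is AUC.

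The main potential obstacle is whether the cited lemmas \cite[Lemma 3.1]{DL} and \cite[Lemma 2.8]{Go}, which in Theorem~\ref{AUDAUC} are applied in a dual/w*-setting, transport cleanly to the present non-dual setting. I expect they do, because both are essentially convex-geometric statements inside a given Banach space (here $Z$) about a slice determined by a bounded functional, and the w*-hypothesis in the dual formulation is used only to produce such a functional from an element of the predual --- a step that is automatic for us since our functional $f$ is taken in $X^*$ to begin with. Apart from this bookkeeping, the argument is formally identical to that of Theorem~\ref{AUDAUC}.
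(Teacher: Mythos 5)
Your proposal is correct and follows exactly the route the paper intends: the paper gives no separate proof of this theorem, stating only that it is ``analogous to the above theorem'' (Theorem~\ref{AUDAUC}), and your argument is precisely that dualization with Hahn--Banach replacing the role of w*-closedness. Your observations that $x \notin Y$ follows from the strengthened infimum inequality and that the cited lemmas are internal convex-geometric statements about slices of $B(Z)$ are the right bookkeeping points, and they go through.
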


\section{Ball separation characterisation of AUS}
In order to obtain a ball separation characterisation of AUS, we need another notion of asymptotic uniform w*-denting defined below:

\begin{definition} \rm
$X^*$ is said to be w*-AUD (II) if for every $\e >0$, there is $\del >0$ such that for every $f \in S(X^*)$, there is $Z \in \Z$ containing $f$ and $x \in S(X)$ with $\|x\|_Z =1 $ such that
\[
f \in S(B(Z), x, 1-\del) \mbox{ and } diam (S(B(Z), x, 1-\del)) <\e.
\]
\end{definition}

Now, we consider the following ball separation property:
\begin{definition} \rm
$X$ is said have the asymptotic hyperplane uniform Mazur intersection property (AHUMIP) if for every $\e >0$, $M \geq 1$, there exist $\gamma, K >0$ such that for every $f \in S(X^*)$, there exists $Z \in \Z$ such that $f \in Z$ and for any closed convex set $C \subseteq X$ with $C \subseteq B_Z[0, M]$ and $\inf f(C) \geq \e$, there exist $z_0 \in X$ and $r_0 > 0$ such that $r_0 \leq K$, $C \subseteq B_Z [z_0, r_0] \leq r_0$ and $\inf f(B[z_0, r_0]) \geq \gamma$.
\end{definition}

The proof of the following lemma uses the techniques presented in \cite[Lemma 2.1]{IK}.

\begin{lemma} \label{lemsize}
Let $\e >0$, $0 <\beta <1$, $f \in S(X^*)$. Let $x \in S(X)$ and $Z \in \Z$ be such that $f \in S(B(Z), x/\|x\|_Z, \beta)$ and $diam (S(B(Z), x/\|x\|_Z, \beta)) <\e$. Then for any $1 >\gamma \geq \beta$, there exists $u \in S(X)$ with $\|u\|_Z=1$ such that $diam (S(B(Z), u, \gamma)) <2\e$ and $f \in S(B(Z), u, \gamma)$.
\end{lemma}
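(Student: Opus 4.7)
The plan is to first use Lemma~\ref{lemext} to replace $v:=x/\|x\|_Z$ by a vector in $S(X)$ without altering any slice of $B(Z)$, then handle the case $f(v_0)>\gamma$ trivially and, in the remaining case, form a convex combination of $v_0$ with a near-norming vector of $f$ produced by standard Hahn--Banach/Goldstine arguments.

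\textbf{Reduction.} Set $v:=x/\|x\|_Z$, so $\|v\|_Z=1$. Lemma~\ref{lemext} yields $v_0\in S(X)$ with $v_0|_Z=v|_Z$; since any slice $S(B(Z),w,\alpha)$ depends only on $w|_Z$, the slice $S(B(Z),v_0,\beta)$ coincides with the given one and hence has diameter $<\e$ and contains $f$. If $f(v_0)>\gamma$, put $u:=v_0$; then $S(B(Z),u,\gamma)\subseteq S(B(Z),v_0,\beta)$ has diameter $<\e<2\e$.

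\textbf{Perturbation and slice inclusion.} In the remaining case $\beta<f(v_0)\le\gamma$, set $\delta:=f(v_0)-\beta>0$. Since $\|f\|_{X^*}=\|f\|_Z=1$, a Hahn--Banach/Goldstine argument in $Z$ produces $y'\in X$ with $\|y'\|_Z=1$ and $f(y')>1-\eta$ for an $\eta>0$ to be specified; Lemma~\ref{lemext} then supplies $y\in S(X)$ with $y|_Z=y'|_Z$. Put $\lambda:=(1-\gamma)/(1-\beta)\in(0,1)$, the critical value at which $\lambda\beta+(1-\lambda)=\gamma$, and $u':=\lambda v_0+(1-\lambda)y$. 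Then $\|u'\|_Z\le 1$ and $f(u')>\gamma+\lambda\delta-(1-\lambda)\eta$, which exceeds $\gamma$ once $\eta<\lambda\delta/(1-\lambda)$. For any $g\in B(Z)$ with $g(u')>\gamma$, the bound $g(y)\le\|y\|_Z=1$ yields $g(v_0)>(\gamma-1+\lambda)/\lambda=\beta$, so $g$ lies in the original small slice and $\|g-f\|<\e$. Finally set $u'':=u'/\|u'\|_Z$ (well-defined since $\|u'\|_Z\ge f(u')>\gamma>0$) and apply Lemma~\ref{lemext} once more to obtain $u\in S(X)$ with $u|_Z=u''|_Z$, $\|u\|_Z=1$, and $f(u)>\gamma$.

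\textbf{Main obstacle.} The delicate point is that after the final scaling, $S(B(Z),u,\gamma)=\{g\in B(Z):g(u')>\gamma\|u'\|_Z\}$ is strictly larger than $S(B(Z),u',\gamma)$ when $\|u'\|_Z<1$; the factor $2$ in the conclusion is exactly what absorbs this slack. Following \cite[Lemma~2.1]{IK}, one exploits that any $h_{v_0}\in B(Z)$ norming $v_0$ lies in the small slice (hence $\|h_{v_0}-f\|<\e$), so that $h_{v_0}(y)>1-\eta-\e$ and consequently $\|u'\|_Z\ge h_{v_0}(u')>1-(1-\lambda)(\eta+\e)$. Choosing $\eta$ small keeps $\|u'\|_Z$ close to $1$, so the enlarged slice lies in a slight enlargement of the small slice of $v_0$; the diameter bound $<2\e$ then follows from the triangle inequality through $f$. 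The main computational hurdle is tuning $\eta$ against the a priori unquantified $\delta$ so that the chain of inequalities closes.
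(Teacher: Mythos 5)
Your reduction and the slice--inclusion computation for the unnormalized vector $u'$ are fine, but the final normalization step contains a genuine gap, and it is exactly the step you flag as the ``main obstacle''. The assertion that ``choosing $\eta$ small keeps $\|u'\|_Z$ close to $1$'' is false: your lower bound is $\|u'\|_Z>1-(1-\lambda)(\eta+\e)$, and the term $(1-\lambda)\e$ does not shrink with $\eta$ because $\e$ is \emph{given}, not at your disposal. Quantitatively, for $g$ in the enlarged slice $\{g\in B(Z):g(u')>\gamma\|u'\|_Z\}$ your argument only yields $g(v_0)>\beta-\tau$ with $\tau=\gamma\frac{\gamma-\beta}{1-\gamma}(\eta+\e)$, and the factor $\frac{\gamma-\beta}{1-\gamma}$ blows up as $\gamma\to 1$. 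Indeed, since your weight is $\lambda=(1-\gamma)/(1-\beta)$, for $\gamma$ near $1$ the vector $u'$ is essentially $y$, about which you have no slice-diameter information; after dividing by $\|u'\|_Z$ the slice level drops by $\gamma(1-\|u'\|_Z)$, which can be of order $\e(\gamma-\beta)$, i.e.\ far larger than the available room $1-\gamma$. So the ``slight enlargement'' is not slight, and neither the triangle inequality through $f$ nor the standard slice-doubling estimate (which here would require $\|u'\|_Z\geq 2\gamma/(1+\gamma)$) closes the argument.

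The paper's proof avoids precisely this trap by normalizing \emph{before} fixing the mixing parameter: it sets $G(\alpha)=(1+\alpha\beta)/\|z+\alpha x_1\|_Z$ and chooses $\alpha_0$ by the intermediate value theorem so that $G(\alpha_0)=1-\del/2$ with $\del=1-\gamma$; then $y=(z+\alpha_0x_1)/\|z+\alpha_0x_1\|_Z$ already satisfies $\|y\|_Z=1$, the inclusion $S(B(Z),y,1-\del/2)\subseteq S(B(Z),x_1,\beta)$ is established at the deeper level $1-\del/2>\gamma$, and passing from level $1-\del/2$ to level $\gamma$ costs exactly the factor $2$ via \cite[Lemma 3.4 (b)]{Go}. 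If you want to keep your convex-combination formulation, you must choose $\lambda$ adaptively, as a function of the resulting seminorm $\|\lambda v_0+(1-\lambda)y\|_Z$, rather than fixing $\lambda=(1-\gamma)/(1-\beta)$ in advance; that is exactly what the intermediate value argument accomplishes.
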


\begin{proof}
Let $\del = 1 - \gamma$. Put $x_1=\frac{x}{\|x\|_Z}$.
Let $z \in S(X)$ be such that $f(z) >1-\del/2$.

Define $\ds G(\alpha):= \frac{1+\alpha \beta}{\|z + \alpha x_1\|_Z}$.
We have, $G(0) \geq 1$ and $\ds \lim_{\alpha \rightarrow \infty} G(\alpha)=\beta$.

Since $1 >1-\del/2 > \beta$, there exists $\alpha_0 >0$ such that $G(\alpha_0)=1-\del/2$, \emph{i.e.},
\[
\frac{1+\alpha_0 \beta}{\|z + \alpha_0 x_1\|_Z}=1- \del/2.
\]

Let
\[
y:=\frac{z+\alpha_0 x_1}{\|z+\alpha_0 x_1\|_Z}.
\]
By Lemma ~\ref{lemext}, there exists $u \in S(X)$ such that $u|_Z = y|_Z$.

Consider $h \in S(B(Z), u, 1-\del/2)$. Then,
\beqa
1+\alpha_0 h(x_1) & \geq & h(z) + \alpha_0 h(x_1) = h(z + \alpha_0 x_1) = h(y)\|z + \alpha_0 x_1\|_Z\\
& > & (1-\del/2) \|z + \alpha_0 x_1\|_Z = 1 + \alpha_0 \beta.
\eeqa

So, $h \in S(B(Z), x_1, \beta)$. That is, $S(B(Z), u, 1-\del/2) \subseteq S(B(Z), x_1, \beta)$ and hence, $diam (S(B(Z), u, 1-\del/2)) <\e$. By applying \cite[Lemma 3.4 (b)]{Go} to $B(Z)$, $diam (S(B(Z), u, \gamma)) <2\e$. Also,
\beqa
f(u) & = & f(y) = f\left(\frac{z + \alpha_0 x_1}{\|z+\alpha_0 x_1\|_Z}\right) \geq \frac{f(z) + f(z)\alpha_0 f(x_1)}{\|z + \alpha_0 x_1\|_Z} \\
& > & f(z) \frac{1+\alpha_0 \beta}{\|z + \alpha_0 x_1\|_Z}= f(z) (1-\del/2) > (1-\del/2)^2 > 1 - \del = \gamma.
\eeqa

So, $f \in S(B(Z), u, \gamma)$.
\end{proof}

Following variants of the results from \cite{BGG} are crucial in obtaining the ball separation characterisation of AUS:

\begin{lemma} \label{lemasymhyp}
Let $X$ be a Banach space and $Z \in \Z$. If for $x, y \in X$ with $\|x\|_Z = \|y\|_Z =1$ and $\e > 0$,
\[
\{f \in B(Z) : f(x) > \e\} \subseteq \{g \in X^* : g(y) > 0\},
\]
then $\|x-y\|_Z < 2\e$.
\end{lemma}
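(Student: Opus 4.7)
The plan is to use the identity $\|x - y\|_Z = \sup_{f \in B(Z)} (f(x) - f(y))$ (which holds since $B(Z)$ is symmetric) and bound the supremum by $2\e$ via a case analysis on the sign of $f(y)$. Since $Z$ is w*-closed, the ball $B(Z)$ is w*-compact, and the conditions $\|x\|_Z = \|y\|_Z = 1$ are attained: there exist $g, h \in B(Z)$ with $g(y) = 1$ and $h(x) = 1$. The contrapositive of the hypothesis reads: for every $f \in B(Z)$, $f(y) \le 0$ implies $f(x) \le \e$.

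Fix $f \in B(Z)$. If $f(y) \ge 0$, I slide $f$ toward $-g$ by setting $f_s := (1-s)f - sg \in B(Z)$ and picking $s^* = f(y)/(1+f(y))$, so that $f_{s^*}(y)=0$; the hypothesis then gives $f_{s^*}(x) \le \e$, and a short computation using $1 - s^* = 1/(1+f(y))$ together with $g(x) \le \|x\|_Z = 1$ produces
\[
f(x) - f(y) \le \e + f(y)\bigl(g(x) + \e - 1\bigr) \le 2\e.
\]
If instead $f(y) < 0$, the hypothesis already forces $f(x) < \e$, so I slide $f$ toward $h$ via $f_t := (1-t)f + th \in B(Z)$, choosing $t^* = (\e - f(x))/(1 - f(x)) \in (0,1)$ to achieve $f_{t^*}(x) = \e$. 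For every $t > t^*$, one has $f_t(x) > \e$, so the hypothesis gives $f_t(y) > 0$; letting $t \downarrow t^*$ yields $f_{t^*}(y) \ge 0$. Expanding this and using $h(y) \le 1$ produces $(1-\e)|f(y)| \le \e - f(x)$, hence
\[
f(x) - f(y) \le \e\bigl(1 + |f(y)|\bigr) \le 2\e.
\]

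The hardest part is upgrading these bounds to strict inequality. In the first case this is immediate: equality would force $f(y) = 1$, whereupon $\|f\| \le 1$ gives $f(x) \le 1$ and thus $f(x) - f(y) \le 0 < 2\e$, ruling out equality. The second case is more delicate, since the bound is saturated when $|f(y)|$ and $h(y)$ simultaneously equal $1$; to force strictness one must exploit the open form of the hypothesis (the strict inequality $f(x) > \e$) more sharply, for instance by perturbing $t^*$ to $t^* + \delta$ for small $\delta > 0$, deriving a strict version of $f_{t^* + \delta}(y) > 0$, and passing to the limit with a $\delta$-sharpened bound. This is the step I expect to be the main obstacle.
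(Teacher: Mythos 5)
Your argument for the bound $\|x-y\|_Z\le 2\e$ is correct and is genuinely different from the paper's. The paper restricts to $Z$: it sets $u=x|_Z$, $v=y|_Z\in S(Z^*)$ and invokes \cite[Lemma 2.5]{BGG} to produce $t>0$ with $|1-t|\le\|u-tv\|\le\e$, after which the triangle inequality gives $\|u-v\|\le 2\e$. You instead work directly in the w*-compact ball $B(Z)$, write $\|x-y\|_Z=\sup_{f\in B(Z)}(f(x)-f(y))$, and bound each $f$ by sliding it along a segment of $B(Z)$ toward $-g$ (resp.\ $h$), where $g,h$ norm $y$ and $x$; the choice of $s^*$ (resp.\ $t^*$) places the perturbed functional exactly on the boundary of the hypothesis region, and both computations check out ($f_s, f_t$ do stay in $B(Z)$ as convex combinations of elements of the symmetric convex set $B(Z)$, and attainment of $\|x\|_Z,\|y\|_Z$ is legitimate because $Z$ being w*-closed makes $B(Z)$ w*-compact). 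Your version is self-contained, at the cost of a case analysis; the paper's is two lines but outsources the work to \cite{BGG}. One small imprecision: in Case 2 the contrapositive only gives $f(x)\le\e$, not $f(x)<\e$; this is harmless, since if $f(x)=\e$ then $t^*=0$ and your limiting argument forces $f(y)\ge 0$, so that subcase is vacuous.

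The step you single out as the main obstacle --- upgrading Case 2 to a strict inequality --- should be abandoned, because the strict conclusion is false as stated. Take $X=\R^2$ with the norm whose dual ball is $K=\mathrm{conv}\{\pm(1,0),\,\pm(1-2\e,1)\}$ for a fixed $0<\e<1/3$, let $Z=X^*\in\Z$, $x=(1,0)$ and $y=(1,2\e)$. Then $\|x\|_Z=\|y\|_Z=1$, and on the edge of $K$ joining $(1,0)$ to $(2\e-1,-1)$ the affine functions $f\mapsto f(x)-\e$ and $f\mapsto f(y)$ vanish at the same point $(\e,-1/2)$ and are positive on the same subsegment; checking the remaining vertices shows $\{f\in K: f(x)>\e\}\subseteq\{f: f(y)>0\}$, yet $\|x-y\|_Z=2\e\max_{f\in K}|f_2|=2\e$. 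Note that the paper's own proof also only delivers $\|u-v\|\le 2\e$, and the non-strict form is exactly what Corollary~\ref{corasymhyp} and Theorem~\ref{thmbsep} use downstream; so the ``$<$'' in the statement is an overstatement, and your proof of the ``$\le$'' version is complete as it stands.
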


\begin{proof}
Let $u=x|_Z$, $v=y|_Z \in S(Z^*)$. Following the arguments in the proof of \cite[Lemma 2.5]{BGG}, there exists $t > 0$ such that
\[|1-t| \leq \|u-tv\| \leq \e. \]
And
\[\|x-y\|_Z = \|u-v\| \leq 2\e. \]
\end{proof}

\begin{corollary} \label{corasymhyp}
Let $Z \in \Z$. If for $x, y \in X$ and $\e > 0$, if
\[
\emptyset \neq \{f \in B(Z) : f(x) > \e\} \subseteq \{g \in X^* : g(y) > 0\},
\]
then
\[
\left\|\frac{x}{\|x\|_Z}-\frac{y}{\|y\|_Z}\right\|_Z \leq \frac{2\e}{\|x\|_Z}.
\]
\end{corollary}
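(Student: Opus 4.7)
The plan is to reduce the statement directly to Lemma~\ref{lemasymhyp} by a homogeneity/rescaling argument. The key observation is that the hypotheses are invariant under positive scaling of $x$ and $y$, while the conclusion is exactly the scaled version of the conclusion of Lemma~\ref{lemasymhyp}.

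First I would use the nonemptiness of $\{f \in B(Z) : f(x) > \e\}$ to extract $f_0 \in B(Z)$ with $f_0(x) > \e$, which forces $\|x\|_Z \geq f_0(x) > \e > 0$, so $x/\|x\|_Z$ is well defined. The containment then forces $f_0(y) > 0$, giving $\|y\|_Z \geq f_0(y) > 0$, so $y/\|y\|_Z$ is also well defined. Set $x' = x/\|x\|_Z$ and $y' = y/\|y\|_Z$, both with $Z$-norm equal to $1$.

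Next I would rewrite the hypothesis in terms of $x'$ and $y'$. For $f \in B(Z)$, the condition $f(x) > \e$ is equivalent to $f(x') > \e/\|x\|_Z$, and for $g \in X^*$, $g(y) > 0$ iff $g(y') > 0$ (both $\|x\|_Z$ and $\|y\|_Z$ are positive). Hence the assumed inclusion translates verbatim to
\[
\{f \in B(Z) : f(x') > \e/\|x\|_Z\} \subseteq \{g \in X^* : g(y') > 0\}.
\]

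Applying Lemma~\ref{lemasymhyp} to $x'$, $y'$, and $\e' := \e/\|x\|_Z$ yields
\[
\left\|\frac{x}{\|x\|_Z} - \frac{y}{\|y\|_Z}\right\|_Z = \|x' - y'\|_Z \leq 2\e' = \frac{2\e}{\|x\|_Z},
\]
which is the desired inequality. There is no genuine obstacle here; the only thing to be careful about is ensuring both $\|x\|_Z > 0$ and $\|y\|_Z > 0$ so that the normalisations are legitimate, and this is exactly what the nonemptiness assumption buys.
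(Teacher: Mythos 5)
Your proof is correct and is exactly the intended derivation: the paper states this as an immediate corollary of Lemma~\ref{lemasymhyp} without proof, and the homogeneity/rescaling reduction you give (using nonemptiness of the slice to secure $\|x\|_Z > \e > 0$ and $\|y\|_Z > 0$ before normalising) is the standard and evidently intended argument. No gaps.
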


\begin{theorem} \label{thmbsep}
Let $X$ be a Banach space and $Z \in \Z$. Let $A \subseteq X$ be bounded in $\|\cdot\|_Z$ seminorm and $d_Z(0, A) >0$. If there is a non-empty w*-slice of $B(Z)$ contained in
\[
\{f \in B(X^*) : f(x) > 0 \mbox{ for all } x \in A\},
\]
then there exists a closed ball $B[x_0, r_0] \subseteq X$ such that $A \subseteq B_Z [x_0, r_0]$ and $0 \notin B[x_0, r_0]$.
\end{theorem}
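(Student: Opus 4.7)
The plan is to reduce the statement to an enclosure result in $\|\cdot\|_Z$ via Corollary~\ref{corasymhyp}, and then to lift the resulting center into $X$ using the preannihilator of $Z$ so as to separate the origin in the original norm.

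Fix $y \in X$ and $\alpha > 0$ realizing the slice hypothesis, so that $S_0 := \{f \in B(Z) : f(y) > \alpha\}$ is non-empty (hence $\|y\|_Z > \alpha$) and contained in $\{f \in B(X^*) : f(x) > 0 \mbox{ for all } x \in A\}$. For each individual $x \in A$, the per-$x$ inclusion $S_0 \subseteq \{f : f(x) > 0\}$ is exactly the hypothesis of Corollary~\ref{corasymhyp} (with the roles of $x$ and $y$ interchanged and $\e = \alpha$), yielding
\[
\Big\|\frac{x}{\|x\|_Z} - u\Big\|_Z \;\leq\; \eta \;:=\; \frac{2\alpha}{\|y\|_Z}, \qquad u := \frac{y}{\|y\|_Z}.
\]

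Set $M := \sup_{x \in A}\|x\|_Z$ and $d := d_Z(0,A)$, both finite and positive by assumption. Choose $\lambda := (M+d)/2$; the triangle inequality for $\|\cdot\|_Z$ then gives, for each $x \in A$,
\[
\|x - \lambda u\|_Z \;\leq\; \|x\|_Z\Big\|\frac{x}{\|x\|_Z} - u\Big\|_Z + \big|\|x\|_Z - \lambda\big| \;\leq\; M\eta + \tfrac{M-d}{2} \;=:\; R,
\]
so the $\|\cdot\|_Z$-ball $B_Z[\lambda u, R']$ contains $A$ for any $R' > R$. It remains to realize this as a genuine $X$-ball that excludes $0$.

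Here the preannihilator $Z_\perp := \{w \in X : f(w) = 0 \text{ for all } f \in Z\}$ comes in. Since $Z \in \Z$ is $w^*$-closed of finite codimension in $X^*$, $Z_\perp$ is a closed (generically non-zero, finite-dimensional) subspace of $X$ on which $\|\cdot\|_Z$ vanishes identically. Set $x_0 := \lambda u + z$ for $z \in Z_\perp$; then $\|x - x_0\|_Z = \|x - \lambda u\|_Z \leq R$ is unaffected by $z$, while $\|x_0\| = \|\lambda u + z\|$ can be made arbitrarily large by choosing $\|z\|$ large. Taking such $z$ and any $r_0 \in (R, \|x_0\|)$ furnishes a ball $B[x_0, r_0]$ with $A \subseteq B_Z[x_0, r_0]$ and $0 \notin B[x_0, r_0]$.

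The main obstacle is the degenerate case $Z = X^*$, where $Z_\perp = \{0\}$ and this lifting trick disappears because $\|\cdot\|_Z = \|\cdot\|$. In that case one has to construct $x_0$ with $\sup_{x\in A}\|x-x_0\| < \|x_0\|$ strictly; the natural candidate is $x_0 = Ty$ with $T$ large, and the job is to show $\sup_{x \in A} \|x - Ty\| < T\|y\|$ by splitting $f \in B(X^*)$ according to the sign of $f(y)$ and using the slice hypothesis on $f$ (when $f(y) > \alpha$, yielding $f(x) > 0$) and on $-f$ (when $f(y) < -\alpha$, yielding $f(x) < 0$) to kill the naive extremal contributions. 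Making this quantitative --- i.e., extracting a uniform positive gap --- is the delicate point, and is where I expect the real work of the proof to lie.
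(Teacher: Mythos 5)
Your argument splits into two cases, and neither is in the shape the theorem needs. For $Z \neq X^*$ the $Z_\perp$-translation trick is formally valid, but look at what it actually uses: since $\|\cdot\|_Z$ vanishes on $Z_\perp$, you could just as well take $x_0 = Tw$ for any nonzero $w \in Z_\perp$ and $T$ large and get $A \subseteq B_Z[x_0, M]$ and $0 \notin B[x_0, M]$ without invoking the slice hypothesis or $d_Z(0,A)>0$ at all. So in this case you prove the literal statement only by exploiting a degeneracy of its formulation (containment measured in the seminorm, exclusion measured in the norm); the resulting ball has uncontrolled radius and a center unrelated to the slicing direction, which makes it useless for the way the theorem is actually invoked in Theorem~\ref{thm1} $(d)\Rightarrow(e)$, where the center must be a large multiple of the slicing vector and $r_0 \leq K$ uniformly. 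For $Z = X^*$ --- which is exactly where the real content lives --- you only sketch an approach and explicitly defer ``extracting a uniform positive gap''. That deferred step is the heart of the proof, so this is a genuine gap.

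The paper's proof handles all cases at once and quantitatively. Write the slice as $\{f \in B(Z): f(x_0) > \e\}$ with $x_0 \in S(X)$ (so $\|x_0\|=1$ while possibly $\|x_0\|_Z < 1$, and non-emptiness forces $\e < \|x_0\|_Z$), and center the ball at $\lambda x_0$ with $\lambda \geq M/\bigl(\e(1-\e/\|x_0\|_Z)\bigr)$ --- large, not $\lambda = (M+d)/2$ as you chose; with your $\lambda$ the radial term does not dominate and the needed inequality $R < \lambda\|u\|$ fails unless $2M\alpha < d\,\|y\|_Z$. Using the unsymmetrised estimate inside Lemma~\ref{lemasymhyp} (the scalar $t$ with $\bigl\|t x/\|x\|_Z - x_0/\|x_0\|_Z\bigr\|_Z \leq \e/\|x_0\|_Z$ and $|1-t| \leq \e/\|x_0\|_Z$, rather than the factor-$2$ symmetric version of Corollary~\ref{corasymhyp}), one gets
$\|x - \lambda x_0\|_Z \leq \lambda\|x_0\|_Z - (\|x\|_Z/t)(1-\e/\|x_0\|_Z) \leq \lambda - d(1-\e/\|x_0\|_Z)/2$, while $\|\lambda x_0\| = \lambda$. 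The uniform defect $d(1-\e/\|x_0\|_Z)/2$ in the radius is precisely the positive gap you were missing; it comes from combining $d_Z(0,A) \geq d$ with the approximate positive alignment of each $x/\|x\|_Z$ with $x_0/\|x_0\|_Z$, and it survives the case $Z = X^*$ unchanged.
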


\begin{proof}The proof is an adaptation of the proof of
\cite[Theorem 2.6]{BGG}.

Let $x_0 \in S(X)$ and $0 < \e < 1$ be such that
\[
\emptyset \neq \{f \in B(Z) : f(x_0) > \e\} \subseteq \{f \in X^* : f(x) > 0 \mbox{ for all } x \in A\}.
\]
Let $M = \sup \{\|x\|_Z : x \in A\}$. As in Lemma~\ref{lemasymhyp}, for all $x \in A$, there exists $t > 0$ such that $1-\e/\|x_0\|_Z \leq t \leq 1+\e/\|x_0\|_Z$ and
\[
\left\|\frac{t x}{\|x\|_Z} - \frac{x_0}{\|x_0\|_Z}\right\|_Z \leq \frac{\e}{\|x_0\|_Z}.
\]
That is,
\[
\left\|\frac{t\|x_0\|_Z}{\|x\|_Z}x - x_0\right\|_Z \leq \e.
\]
Also, since $\e < \|x_0\|_Z \leq 1$, we have $0 <1-\e/\|x_0\|_Z \leq t <2$.

Then for $\lambda \geq M/\e(1-\e/\|x_0\|_Z)$ and $0 <d < d_Z(0, A)$,
\beqa
\|x- \lambda x_0\|_Z & \leq & \left\|x- \frac{\|x\|_Z}{t\|x_0\|_Z} x_0\right\|_Z + \left|\frac{\|x\|_Z}{t\|x_0\|_Z} - \lambda\right|\|x_0\|_Z \\ & \leq & \frac{\e \|x\|_Z}{t\|x_0\|_Z} + \lambda \|x_0\|_Z - \frac{\|x\|_Z}{t}\\
& \leq & \lambda - \frac{d(1-\e/\|x_0\|_Z)}{2}.
\eeqa
So, $B\left[\lambda x_0, \lambda - \frac{d(1-\e/\|x_0\|_Z)}{2}\right]$ satisfies the desired conditions.
\end{proof}

\begin{theorem} \label{thm1}
The following are equivalent:
\bla
\item $X$ is AUS.
\item $X^*$ is w*-AUC.
\item $X^*$ is w*-AUD (I).
%\item $X^*$ has property w*-AUC (II).
\item $X^*$ is w*-AUD (II).
\item $X^*$ is AHUMIP.
\el
\end{theorem}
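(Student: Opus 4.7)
The equivalences (a)$\Leftrightarrow$(b) and (b)$\Leftrightarrow$(c) come for free from Theorem~\ref{AUCAUS} and Theorem~\ref{AUDAUC}, so the task reduces to proving the cycle (c)$\Rightarrow$(d)$\Rightarrow$(e)$\Rightarrow$(c).

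The implication (c)$\Rightarrow$(d) is essentially a direct application of Lemma~\ref{lemsize}. Given $\e>0$, I would invoke w*-AUD~(I) at tolerance $\e/2$ to obtain $\del>0$, and for any $f\in S(X^*)$ let $Z\in\Z$ and $x\in S(X)$ be the resulting witnesses, so that the slice $S(B(Z),x/\|x\|_Z,f(x/\|x\|_Z)(1-\del))$ has diameter less than $\e/2$. Lemma~\ref{lemsize} applied with $\beta=f(x/\|x\|_Z)(1-\del)\le 1-\del$ and $\gamma=1-\del/2\ge\beta$ then produces $u\in S(X)$ with $\|u\|_Z=1$, $f\in S(B(Z),u,1-\del/2)$, and slice diameter below $\e$, which is precisely (d).

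The technical heart of the theorem is (d)$\Rightarrow$(e). Fix $\e>0$ and $M\ge 1$, apply w*-AUD~(II) at tolerance $\e/(2M)$ to obtain $\del>0$, and for each $f\in S(X^*)$ let $Z$ and $x$ be the resulting witnesses, so that $S:=S(B(Z),x,1-\del)$ contains $f$ and has $\|\cdot\|_{X^*}$-diameter below $\e/(2M)$. For any closed convex $C\subseteq B_Z[0,M]$ with $\inf f(C)\ge\e$, the key estimate $|(g-f)(c)|\le\|g-f\|_{X^*}\|c\|_Z<\e/2$ (valid because $g-f\in Z$) yields $g(c)>\e/2$ throughout $S\times C$, so Theorem~\ref{thmbsep} produces a ball $B[\lambda x,r_0]$ in $X$ with $C\subseteq B_Z[\lambda x,r_0]$. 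The principal obstacle is that Theorem~\ref{thmbsep} only secures $0\notin B[\lambda x,r_0]$, while AHUMIP demands the sharper $\inf f(B[\lambda x,r_0])\ge\gamma$. I would overcome this by revisiting the proof of Theorem~\ref{thmbsep}: replacing the invocation of Corollary~\ref{corasymhyp} by a sharpened variant adapted to the stronger inclusion $S\subseteq\{g:g(c)>\e/2\}$ (effectively applying Corollary~\ref{corasymhyp} to the pair $(x,2c/\e)$ rather than $(x,c)$) tightens the bound on $\|c-\lambda x\|_Z$ enough to allow $\lambda$ to be chosen so that $r_0<\lambda f(x)-\gamma$ for some $\gamma=\gamma(\e,M)>0$ with $r_0\le K(\e,M)$.

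For (e)$\Rightarrow$(c), I would apply AHUMIP to the canonical convex set $C=\{y\in X:\|y\|_Z\le M,\,f(y)\ge\e\}$ with fixed parameters (say $\e=1/2,M=1$); this set is nonempty because $\sup\{f(y):\|y\|_Z\le 1\}=1$. The resulting ball $B[z_0,r_0]$ satisfies $f(z_0)\ge\gamma+r_0$ and $\|z_0\|_Z\le 1+r_0\le 1+K$, so extending $z_0/\|z_0\|_Z$ via Lemma~\ref{lemext} to a unit vector $x\in S(X)$ with $\|x\|_Z=1$ gives $f(x)=f(z_0)/\|z_0\|_Z\ge\gamma/(1+K)>0$, putting $f$ in a slice $S(B(Z),x,1-\del')$ for an appropriate $\del'$. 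Any two $g_1,g_2$ in this slice must nearly agree on $C$ by the ball inclusion $C\subseteq B_Z[z_0,r_0]$, and a Lemma~\ref{lemasymhyp}-style Hahn-Banach argument translates this primal control into a bound on $\|g_1-g_2\|_{X^*}$ itself, producing the small-diameter slice required by w*-AUD~(I) and closing the cycle.
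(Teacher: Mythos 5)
Your reduction to the cycle (c)$\Rightarrow$(d)$\Rightarrow$(e)$\Rightarrow$(c) and your arguments for (c)$\Rightarrow$(d) and (e)$\Rightarrow$(c) match the paper's proof (the latter is sketchier than the paper's, which works with the set $A_{\e}=\{x\in B(X): f(x)\ge \e/4\}$ and the slice $S(B(Z),z_1,f(z_1)(1-\del))$ with $\del=\gamma/(K+\gamma)$, but the idea is the same). The genuine gap is in (d)$\Rightarrow$(e), exactly at the point you flag as the principal obstacle. Your proposed repair does not work: the half-space $\{g: g(y)>0\}$ is invariant under positive scaling of $y$, so applying Corollary~\ref{corasymhyp} to the pair $(x,2c/\e)$ instead of $(x,c)$ changes nothing. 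More seriously, no single application of w*-AUD~(II) can close the gap. In Theorem~\ref{thmbsep} the ball is $B[\lambda x,\lambda-d']$ with $\lambda\ge M/\bigl(\e_0(1-\e_0/\|x\|_Z)\bigr)$, where $\e_0$ is the level of the separating slice and $d'=d(1-\e_0/\|x\|_Z)/2$; with your slice $S(B(Z),x,1-\del)$ one has $\e_0=1-\del$, hence $\lambda\sim M/\del$, and $\inf f(B[\lambda x,\lambda-d'])=\lambda f(x)-\lambda+d'=-\lambda(1-f(x))+d'$. Since w*-AUD~(II) only guarantees $1-f(x)<\del$, this is bounded below only by roughly $-M$, not by a positive $\gamma$. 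What is needed is a direction along which $1-f(x)$ is much smaller than the width of the slice doing the separating, and one slice cannot deliver this (Lemma~\ref{lemsize} raises the level but degrades the ratio correspondingly).

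The paper bridges this with two devices absent from your proposal. First, a two-slice nesting: a coarse slice $S(B(Z),x_1,\alpha-\del)$ of diameter $<\e/4M_1$ provides the separation, while a fine slice $S(B(Z),x_2,\gamma)$ with $1-\gamma<\del/10k$ and diameter $<\del/10k$ is shown, via the observation that $g\notin S(B(Z),x_1,\alpha-\del/2)$ forces $\|f-g\|>\del/2$, to sit inside the coarse one; the ball is then centred along $x_2$, for which $\lambda(1-f(x_2))<M_1/2k\eta<\e/4$ because $k$ is chosen large in advance. Second, the set $C$ is fattened to $D=\overline{C+\tfrac{3\e}{4}B(X)}$ before applying Theorem~\ref{thmbsep}, and the radius is afterwards shrunk by $3\e/4$; it is this margin, minus the $\e/4$ loss above, that produces $\inf f(B_1)>\e/2$ and hence the uniform $\gamma$ demanded by AHUMIP. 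Without both ideas the implication (d)$\Rightarrow$(e) does not go through, so you should rework this step along those lines.
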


\begin{proof}
$(a) \iff (b)$: This is Theorem ~\ref{AUCAUS}.

$(b) \iff (c)$: This is Theorem ~\ref{AUDAUC}.

$(c) \implies (d)$: Let $X^*$ has w*-AUD (I).

Let $\e >0$ be given. Then, there exists $\del >0$ such that for every $f \in S(X^*)$ there is $Z \in \Z$ containing $f$ and $x \in S(X)$ such that
\[
S(B(Z), x, f(x)(1-\del)) <\e/2.
\]
So,
\[
S(B(Z), x/\|x\|_Z, f(x/\|x\|_Z)(1-\del)) <\e/2.
\]

Now, $1-\del \geq f(x/\|x\|_Z)(1-\del)$.

By Lemma ~\ref{lemsize}, there exists $y \in S(X)$ with $\|y\|_Z = 1$ such that
\[
diam (S(B(Z), y, 1-\del)) <\e \mbox{ and } f(y) >1-\del.
\]

So, $X^*$ is w*-AUD (II).

$(d) \implies (e)$: We follow the techniques developed in \cite[Theorem 4.7]{Go}. But due to the quantitative variations, we include the proof here.

Let $\e >0$ and $M \geq 2$ be given. Let $M_1 := M + 3\e/4$.
Since $X^*$ is w*-AUD (II), with a slight variation in \cite[Lemma 3.4]{Go},
there exist $0 < \alpha < 1$ and $\del >0$ such that for every $f \in S(X^*)$, there exists $Z_1 \in \Z$ containing $f$ and $x \in S(X)$ with $\|x\|_{Z_1} = 1$ satisfying
\[
diam (S(B(Z_1), x, \alpha - \del)) <\e/4M_1 \mbox{ and } f(x) >\alpha.
\]

Choose $k \in \mathbb{N}$ such that $1/2(k-1) <\e/4M_1$. Again, by using \cite[Corollary 3.5]{Go}, there exists $0 < \gamma < 1$ such that for every $f \in S(X^*)$, there is $Z_2 \in \Z$ containing $f$ and $x \in S(X)$ with $\|x\|_{Z_2}=1$ such that
\[
diam (S(B(Z_2), x, \gamma)) <\del/10k \mbox{ and } f(x) > \frac{1 + \gamma}{2}.
\]

Then, by arguments similar to \cite[Lemma 3.4 (a)]{Go}, $(1 - \gamma) \leq \del/10k <1/2k <\e/4M_1$. Hence, $1-2k(1-\gamma) >0$.

%It suffices to show that for $K=\frac{M_1}{2k(1 - \gamma)}$, whenever a closed convex set $C$ and $f \in S(X^*)$ are such that $\sup\{\|x\| : x \in C\} \leq M$ and $\inf f(C) \geq \e$, there is a closed ball $B[x_0, r_0]$ in $X$ such that $C \subseteq B[x_0, r_0]$, $\inf f(B[x_0, r_0]) \geq \e/2$ and $r_0 \leq K$.

Let $f \in S(X^*)$. So, there exist $Z_1, Z_2 \in \Z$ containing $f$ and $x_1, x_2 \in S(X)$ with $\|x_1\|_{Z_1}=\|x_2\|_{Z_2}=1$ such that
\beqa
f(x_1) >\alpha, && diam(S(B(Z_1), x_1, \alpha-\del)) <\e/4M_1, \mbox{ and}\\
f(x_2) > \frac{1 + \gamma}{2}, && diam (S(B(Z_2), x_2, \gamma) <\del/10k.
\eeqa

Using Lemma ~\ref{lemsize}, we can assume that $\gamma \geq \sqrt{1 - 1/4k^2} >1 - 1/2k$.

Let $Z=Z_1 \cap Z_2$. Then $Z \in \Z$ contains $f$ and
\[
diam(S(B(Z), x_1, \alpha-\del)) <\e/4M_1 \mbox{ and } diam (S(B(Z), x_2, \gamma)) <\del/10k.
\]

Let $\eta = \gamma(1-2k(1-\gamma)) > 0$. We have,
\[
\eta = \gamma-2k\gamma + 2k\gamma^2 \geq \gamma - 2k\gamma + 2k(1 - 1/4k^2) \geq \gamma - 1/2k \geq 1-1/k.
\]
Also,
\[
\frac{f(x_2)-\eta}{f(x_2)-\gamma} \leq \frac{2(1-\eta)}{1-\gamma} \leq 2(2k+1).
\]

So, by a slight modification in \cite[Corollary 3.5]{Go},
\[
diam (S(B(Z), x_2, \eta)) \leq \frac{f(x_2)-\eta}{f(x_2)-\gamma} diam (S(B(Z), x_2, \gamma)) < \del/2.
\]

If $g \in B(Z) \setminus S(B(Z), x_1, \alpha-\del/2)$, then $\|f-g\| \geq f(x_1) - g(x_1) > \del/2$. Since $f \in S(B(Z), x_2, \eta)$, it follows that $g \notin S(B(Z), x_2, \eta)$. Thus, we obtain that
\beqa
f \in S(B(Z), x_2, \gamma) & \subseteq & S_1 = S(B(Z), x_2, \eta) \\
& \subseteq & S(B(Z), x_1, \alpha-\del/2) \subseteq S(B(Z), x_1, \alpha-\del).
\eeqa
Therefore, $diam (S_1) <\e/4M_1$.

Let $C$ be a closed convex set such that $C \subseteq B_Z [0, M]$ and $\inf f(C) \geq \e$.

Define $D = \overline{C + 3\e/4 B(X)}$. Then $D \subseteq B_Z [0, M_1]$ and $\inf f(D) \geq \e/4$. Hence, for $z \in D$ and $g \in B(Z) \cap B(f, \e/4M_1)$,
\[
g(z) \geq f(z) - \|f-g\| \|z\|_Z > \e/4 - \frac{\e M_1}{4M_1}=0.
\]
Therefore,
\[
S_1 \subseteq B(Z) \cap B(f, \e/4M_1) \subseteq \{g \in B(Z) : g(z) > 0 \mbox{ for all } z \in D\}.
\]

Since, $f \in S(B(Z), x_2, \gamma)$, $\|x_2\|_Z > \gamma >\eta$.
Thus, $(1-\eta/\gamma) \leq (1-\eta/\|x_2\|_Z)$.
Let
\[
d = \frac{d_Z(0, D)(1-\eta/\gamma)}{2} > 0.
\]
Then by Theorem ~\ref{thmbsep}, for $\lambda = M_1/2k(1-\gamma)\eta = M_1/\eta(1-\eta/\gamma) \geq M_1/\eta(1-\eta/\|x_2\|_Z)$, let $B_1=B\left[\lambda x_2, \lambda - d - \frac{3\e}{4}\right]$.

So, we have,
\[\sup_{y \in D}\|y-\lambda x_2\|_Z \leq \lambda-d
\]
\emph{i.e.},
\[
\sup_{ c \in C}\|c-\lambda x_2\|_Z \leq \lambda-d-3\e/4.
\]

Since $1/2k\eta <1/2(k-1) < \e/4M_1$, \emph{i.e.}, $M_1/2k\eta < \e/4$, we have
\beqa
\inf f (B_1) & = & f(\lambda x_2) - \lambda + d + \frac{3\e}{4} > \lambda (\gamma - 1) + d + \frac{3\e}{4}\\
& = & -\frac{M_1}{2k\eta} + d + \frac{3\e}{4} \geq d + \frac{3\e}{4}-\e/4 >\e/2.
\eeqa

Also, \[
\frac{M_1}{2k\eta(1 - \gamma)}- d - \frac{3\e}{4} \leq \frac{M_1}{2k\eta(1 - \gamma)} = K.
\]
Hence, $X$ has the AHUMIP.

$(e) \implies (c)$:
Let $\e >0$ be given. Choose $\gamma, K >0$ obtained from the hypothesis for $\e/4$ and $M=1$.
Let $\del=1-\frac{K}{K+\gamma}$.

Let $f \in S(X^*)$. So, there exists $Z \in \Z$ containing $f$ such that for any closed convex set $C \subseteq B_Z [0, M]$ with $f(C) \geq \e/4$, there exists a closed ball $B[x_0, r_0]$ with $r_0 \leq K$, $C \subseteq B_{Z}[x_0, r_0]$ and $\inf f(B[x_0, r_0]) \geq \gamma$.

Let $A_{\e}:=\{x \in B(X) : f(x) \geq \e/4\}$. Clearly, $A_{\e} \subseteq B_Z [0, 1]$ and $f(A_{\e}) \geq \e/4$.

So, there exists a closed ball $B[x_0, r_0]$ with $r_0 \leq K$, $A_{\e} \subseteq B_Z [x_0, r_0]$ and $\inf f(B[x_0, r_0]) \geq \gamma$. So, $f(x_0) \geq r_0+\gamma$.

Let $z_0=x_0/\|x_0\|_Z$. We have, $\|z_0\|_Z=1$. Then by Lemma~\ref{lemext}, there exists $z_1 \in S(X)$ such that $z_1|_Z=z_0|_Z$.

Consider $S:=S(B(Z), z_1, f(z_1)(1-\del))$.

Choose $g \in S$. So, $g \in Z$ and $g(z_1)=g(z_0) >f(z_0)(K/(K+\gamma))$.
So, $g(x_0) >f(x_0)(r_0/(r_0+\gamma)) \geq r_0$. So, $\inf g(B[x_0, r_0]) >0$.
But, $g \in B(Z)$. Thus, $g(x) >0$ for all $x \in A_{\e}$. So, $\|f-g\| <\e$.

Thus, $X^*$ is w*-AUD (I).
\end{proof}

\section{Residuality Results}
In this section, we are going to discuss residuality of AUS, uniformly smooth and UMIP norms.
Most of the techniques used in the three cases are quite similar. These are motivated by the method implemented by Chen and Lin in \cite{CL} So, we are going to provide detailed proof only for the residuality of AUS norms. Since we will talk about various norms in this section, so let's set some notations depending upon the norm under consideration.

For a subspace $Z$ of $X^*$, $B \in \B$, $C \subseteq X$, and $x \in X$, let
\bla
\item $\|x\|_{Z, B}:=\sup\{g(x) : g \in Z, \|g\|^*_B \leq 1\}$;
\item $d_{Z, B}(x, C) = \inf\{\|x-z\|_{Z, B} : z \in C\}$ and
\item $diam_{Z, B}(C) = \sup\{\|x-y\|_{Z, B} : x, y \in C\}$.
\el
We may drop the subscript in the cases when the norm or the subspace under consideration is clear.

\subsection{Residuality of AUS norms}
Let us begin with an easy observation on some quantities.
\begin{lemma} \label{l2}
Let $B_1, B_2 \in \B$. For every $\e_1 >0$ and $M_1 \geq 1$, there exist $\e_2 >0$ and $M_2 \geq 1$ such that for any $A \subseteq X$ closed convex and $Z \in \Z$, we have the following relations: \bla
\item $d_{Z, B_1}(0, A) \geq \e_1 \implies d_{Z, B_2}(0, A) \geq \e_2$.
\item $diam_{Z, B_1}(A) \leq M_1 \implies diam_{Z, B_2}(A) \leq M_2$.
\el
\end{lemma}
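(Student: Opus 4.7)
My plan is to prove the lemma by establishing that the two seminorms $\|\cdot\|_{Z, B_1}$ and $\|\cdot\|_{Z, B_2}$ are uniformly equivalent with constants that depend only on the ambient norms $\|\cdot\|_{B_1}$ and $\|\cdot\|_{B_2}$, not on the subspace $Z$. Once this is done, both (a) and (b) become one-line consequences.

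First I would use that $\|\cdot\|_{B_1}$ and $\|\cdot\|_{B_2}$ are equivalent norms on $X$, so the corresponding dual norms satisfy
\[
c_1 \|g\|^*_{B_1} \leq \|g\|^*_{B_2} \leq c_2 \|g\|^*_{B_1} \quad \text{for all } g \in X^*,
\]
for some constants $c_1, c_2 > 0$ depending only on $B_1, B_2$. Taking $g \in Z$, these inequalities translate into an inclusion of dual balls inside $Z$: the ball $\{g \in Z : \|g\|^*_{B_2} \leq 1\}$ sits between $\frac{1}{c_2}\{g \in Z : \|g\|^*_{B_1} \leq 1\}$ and $\frac{1}{c_1}\{g \in Z : \|g\|^*_{B_1} \leq 1\}$. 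Taking the supremum of $g(x)$ over these sets, I get, for every $x \in X$ and every $Z \in \Z$,
\[
\frac{1}{c_2}\|x\|_{Z, B_1} \leq \|x\|_{Z, B_2} \leq \frac{1}{c_1}\|x\|_{Z, B_1}.
\]
The key point I want to emphasize is that $c_1, c_2$ are independent of $Z$ and of $x$, so this is a genuinely uniform comparison of the two seminorms.

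With this in hand, for (a) I would observe that $d_{Z, B_1}(0, A) \geq \e_1$ means $\|a\|_{Z, B_1} \geq \e_1$ for every $a \in A$; applying the lower bound above gives $\|a\|_{Z, B_2} \geq \e_1/c_2$, so $\e_2 := \e_1/c_2$ works. For (b), from $\|a - b\|_{Z, B_1} \leq M_1$ and the upper bound I obtain $\|a - b\|_{Z, B_2} \leq M_1/c_1$, so $M_2 := \max\{1, M_1/c_1\}$ works (the max is just to satisfy the cosmetic requirement $M_2 \geq 1$).

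There is no real obstacle here; the lemma is a routine verification whose whole content is the remark that passing from $B_1$ to $B_2$ only rescales the seminorms $\|\cdot\|_{Z, \cdot}$ by a factor depending on $B_1, B_2$ alone. The only thing one has to be careful about is keeping track of which direction of the norm equivalence gives which estimate, and not accidentally letting the constants depend on $Z$.
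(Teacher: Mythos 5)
Your proof is correct, and it supplies exactly the routine verification the paper omits (the lemma is stated there without proof as ``an easy observation''): the dual norms are uniformly equivalent, hence the seminorms $\|\cdot\|_{Z,B_1}$ and $\|\cdot\|_{Z,B_2}$ are equivalent with constants independent of $Z$, and (a), (b) follow immediately. No issues.
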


In particular, for $Z=X^*$, we have the following:

\begin{lemma} \label{l4}
Let $B_1, B_2 \in \B$. For every $\e_1 >0$, $M_1 \geq 1$ and $K_1 \geq 1$, there exist $\e_2 >0$, $M_2 \geq 1$ and $K_2 \geq 1$ such that for any $A \subseteq X$ closed and convex, \bla
\item $d_{B_1}(0, A) \geq \e_1 \implies d_{B_2}(0, A) \geq \e_2$
\item $diam_{B_1}(A) \leq M_1 \implies diam_{B_2}(A) \leq M_2$
\item $A \subseteq K_1 B_1 \implies A \subseteq K_2 B_2$.
\el
\end{lemma}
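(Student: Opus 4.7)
The plan is to reduce everything to the fact that the norms $\|\cdot\|_{B_1}$ and $\|\cdot\|_{B_2}$ on $X$ are equivalent, which is built into the definition of $\B$. More precisely, since $B_1$ and $B_2$ are unit balls of two equivalent norms, there is a constant $c=c(B_1,B_2)>0$ such that
\[
\|x\|_{B_2} \le c\,\|x\|_{B_1}, \qquad x \in X,
\]
or equivalently $B_1 \subseteq c\,B_2$, and similarly in the reverse direction. All three conclusions follow from this estimate with constants depending only on $B_1,B_2$ (and the given $\e_1,M_1,K_1$), not on $A$, which is exactly what the statement permits.

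First I would note that (a) and (b) are the specialisation of Lemma \ref{l2} to $Z = X^{*}$, once we identify $\|\cdot\|_{X^{*},B}$ with $\|\cdot\|_{B}$. This identification is a one-line Hahn--Banach computation: for every $B \in \B$ and every $x \in X$,
\[
\|x\|_{X^{*},B} \;=\; \sup\{g(x) : g \in X^{*},\, \|g\|^{*}_{B} \le 1\} \;=\; \|x\|_{B}.
\]
Consequently $d_{X^{*},B}(0,A) = d_{B}(0,A)$ and $\mathrm{diam}_{X^{*},B}(A) = \mathrm{diam}_{B}(A)$ for every $A \subseteq X$, so applying Lemma \ref{l2} with $Z = X^{*}$ gives (a) and (b).

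For (c), I would argue directly from the equivalence constant above: if $A \subseteq K_1 B_1$, then every $a \in A$ satisfies $\|a\|_{B_1} \le K_1$, hence $\|a\|_{B_2} \le cK_1$, so $A \subseteq cK_1 B_2$ and we may take $K_2 := cK_1$. No new ideas are needed, and indeed part (c) is the reason one cannot simply quote Lemma \ref{l2} in its stated form; it records the analogous comparison for inclusion in a scaled ball rather than for a seminorm distance or diameter.

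There is no substantive obstacle. The whole statement is a bookkeeping lemma that packages the equivalence of norms in $\B$ into the specific three forms that will be convenient in the subsequent residuality arguments; the only minor care needed is to observe that the constants $\e_2,M_2,K_2$ are allowed to depend on the pair $(B_1,B_2)$, which lets one absorb the equivalence constant once and for all.
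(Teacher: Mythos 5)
Your proof is correct and matches the paper's intent: the paper offers no explicit proof, presenting the lemma as the $Z=X^*$ specialisation of Lemma~\ref{l2} (via the standard Hahn--Banach identification $\|\cdot\|_{X^*,B}=\|\cdot\|_B$) together with the immediate consequence of norm equivalence for part (c), which is exactly what you do.
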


Following is an easy observation and we will skip its proof:
\begin{lemma} \label{lemAHUMIP}
$X$ has AHUMIP if and only if for every $\e >0$, $0 <\alpha <\e$ and $M \geq 1$, there exists $K >0$ such that for every $f \in S(X^*)$, there is $Z \in \Z$ containing $f$ such that for any closed, convex set $A$ with $A \subseteq B_Z[0, M]$ and $\inf f(A) \geq \e$, there exist $z_0 \in X$ and $r_0 > 0$ such that $r_0 \leq K$, $A \subseteq B_Z [z_0, r_0]$ and $\inf f(B[z_0, r_0]) \geq \e-\alpha$.
\end{lemma}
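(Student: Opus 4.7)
The backward implication is trivial: given $\e > 0$ and $M \geq 1$, invoke the alternative condition with, say, $\alpha := \e/2$ to obtain a constant $K$; then AHUMIP holds with this $K$ and $\gamma := \e/2$.

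For the forward implication, my plan is to use a translation trick that trades a possibly tiny $\gamma$ for a near-optimal lower bound $\e - \alpha$, at the cost of slightly enlarging the size constraint on the set and the radius constraint on the enclosing ball. Fix $\e > 0$, $0 < \alpha < \e$, and $M \geq 1$. Set $s := \e - \alpha/2$ and $M' := M + s$, and apply AHUMIP to the parameters $(\alpha/2, M')$ to obtain constants $\gamma > 0$ and $K > 0$; I will use this $K$ as the constant for the alternative statement.

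For each $f \in S(X^*)$, let $Z \in \Z$ be the subspace supplied by AHUMIP, and choose $y \in S(X)$ with $f(y) > 1 - \alpha/(2s)$, which is possible because $\|f\| = 1$. Given a closed convex $A \subseteq B_Z[0,M]$ with $\inf f(A) \geq \e$, form the translate $A' := A - sy$. Since $\|y\|_Z \leq \|y\| = 1$, we have $A' \subseteq B_Z[0, M']$, while $\inf f(A') = \inf f(A) - s f(y) \geq \e - s = \alpha/2$. AHUMIP applied to $A'$, using the same $Z$ (which by definition serves every admissible set uniformly), then yields $z_0' \in X$ and $r_0' \in (0, K]$ with $A' \subseteq B_Z[z_0', r_0']$ and $\inf f(B[z_0', r_0']) \geq \gamma$. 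Putting $z_0 := z_0' + sy$ and $r_0 := r_0'$, translation invariance of the $\|\cdot\|_Z$-balls gives $A \subseteq B_Z[z_0, r_0]$, and the choice of $y$ yields
$$\inf f(B[z_0, r_0]) = \inf f(B[z_0', r_0']) + s f(y) > \gamma + s - \alpha/2 = \gamma + \e - \alpha \geq \e - \alpha,$$
as required.

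I do not foresee any substantive obstacle. The only point worth checking is that the subspace $Z$ produced by AHUMIP depends only on $f$ and on the fixed parameters $(\alpha/2, M')$, so that a single $Z$ indeed handles every admissible $A$ in the alternative statement; but this uniformity is precisely what the definition of AHUMIP guarantees.
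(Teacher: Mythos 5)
Your proof is correct. The paper explicitly omits a proof of this lemma (calling it ``an easy observation''), so there is nothing to compare against; your translation trick --- shifting $A$ by $s y$ with $f(y)$ close to $1$, applying AHUMIP at level $\alpha/2$ and size $M+s$, and translating the resulting ball back --- is a clean and valid way to carry out the intended perturbation of quantities, and all the estimates ($A'\subseteq B_Z[0,M']$, $\inf f(A')\ge \alpha/2$, and the final bound $\gamma + s f(y) > \e-\alpha$) check out.
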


We now come to our main result of this section:
\begin{theorem} \label{thm2}
If $X$ has AHUMIP, then there exists a dense $G_{\del}$-subset $\B_0 \subseteq \B$ such that for every $B \in \B_0$, $(X, \|\cdot\|_B)$ has AHUMIP.
\end{theorem}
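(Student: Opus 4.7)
My plan is to apply Baire's theorem in the complete metric space $(\B,h)$, following the strategy of Chen--Lin~\cite{CL} adapted to the AHUMIP setting. For each $(n,m) \in \N \times \N$ with $m \geq 1$, put
\[
V_{n,m} := \{B \in \B : \|\cdot\|_B \mbox{ satisfies AHUMIP with } \e = 1/n \mbox{ and } M = m \mbox{ for some } \gamma, K > 0\}.
\]
Since the AHUMIP condition weakens monotonically as $\e$ increases or $M$ decreases, a norm has AHUMIP precisely when its ball lies in every $V_{n,m}$. It therefore suffices to show that each $V_{n,m}$ is open and dense in $(\B,h)$; the theorem then follows by Baire category.

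\emph{Openness.} Suppose $B_0 \in V_{n,m}$ with witnesses $\gamma_0, K_0$, and let $B$ satisfy $h(B,B_0)$ small. Given $f \in S((X,\|\cdot\|_B)^*)$, I would normalize to $\tilde f := f/\|f\|^*_{B_0}$ and apply the $B_0$-AHUMIP to $\tilde f$ at parameters $(\e',M')$ slightly sharper than $(1/n,m)$ in order to absorb the upcoming perturbation. This produces $Z \in \Z$ containing $f$ and, for each admissible convex $C$, a separating $B_0$-ball $B_0[z_0,r_0]$ with $r_0 \leq K_0$. Lemmas~\ref{l2} and~\ref{l4} then translate the seminorm containment $C \subseteq B_{Z,B_0}[z_0,r_0]$ and the lower bound $\inf f(B_0[z_0,r_0]) \geq \gamma_0$ into their $B$-analogues, with constants only mildly weakened; Lemma~\ref{lemAHUMIP} then absorbs the residual slack to certify $B \in V_{n,m}$.

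\emph{Density.} Given $B \in \B$ and $\delta > 0$, set $B' := B + \eta B(X)$ for small $\eta > 0$, where $B(X)$ denotes the original unit ball, which has AHUMIP by hypothesis. From $B \subseteq N B(X)$ (norm equivalence) one obtains $B \subseteq B' \subseteq (1+\eta N) B$, so $h(B,B') = O(\eta)$ can be made less than $\delta$; simultaneously $B' \supseteq \eta B(X)$ gives two-sided control of $\|\cdot\|_{B'}$ against the original norm, in particular $\|f\|^* \in [1/(N+\eta),\, 1/\eta]\cdot\|f\|^*_{B'}$. For $f \in S((X,\|\cdot\|_{B'})^*)$, I would normalize to $\hat f := f/\|f\|^*$ and apply the original-norm AHUMIP to $\hat f$ at parameters obtained from $(1/n,m)$ via Lemmas~\ref{l2} and~\ref{l4}. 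The resulting original-norm separating pair $(Z, B(X)[z_0,\rho])$ would then be converted into a $B'$-ball $B'[z_0,\rho/\eta]$ that certifies AHUMIP for $\|\cdot\|_{B'}$ at $(1/n,m)$, with witnesses $\gamma, K$ depending only on $n,m,\eta,N$ and the original AHUMIP constants.

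The principal obstacle is the quantitative bookkeeping in the density step: one must match separating-ball parameters across the change of norm so that the ball produced by the original AHUMIP at a rescaled functional genuinely certifies AHUMIP for $\|\cdot\|_{B'}$ at the target pair $(1/n,m)$, and not merely at weaker parameters. This forces a careful choice of $\eta$ together with a preliminary sharpening of $\e$ and $M$ (again via Lemmas~\ref{l2} and~\ref{l4}) and, as in the openness step, a final application of Lemma~\ref{lemAHUMIP} to absorb any residual slack. The openness argument, though similar in spirit, is easier because it deals only with a small perturbation of a single good norm.
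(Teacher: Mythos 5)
Your decomposition is not the paper's, and as it stands it has two genuine gaps. First, the sets $V_{n,m}$ you define (``the $B$-norm satisfies AHUMIP at $\e=1/n$, $M=m$'') are not obviously open in $(\B,h)$, and your openness sketch does not establish this: you propose to apply the $B_0$-AHUMIP ``at parameters $(\e',M')$ slightly sharper than $(1/n,m)$'', but membership in $V_{n,m}$ only gives the property at exactly $(1/n,m)$. After passing from $\|\cdot\|_B$ to $\|\cdot\|_{B_0}$, the hypothesis on $C$ degrades to $C\subseteq B_{Z,B_0}[0,m(1+\eta)]$ and $\inf(f/\|f\|^*_{B_0})(C)\geq (1/n)/(1+\eta)$, which lies outside the scope of what $B_0\in V_{n,m}$ guarantees, so there is nothing to apply. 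This is precisely why the paper avoids the naive decomposition: it defines $\B_{n,k}$ so that the hypothesis is membership of the triple $(A,f/\|f\|,Z)$ in a class $\A_{\alpha,n,k}$ anchored to the \emph{original} norm (existence of an original-norm separating ball), a condition that does not move as $B$ varies; the deduction that every $B\in\B_0$ has AHUMIP is then a separate step (Claim 1 in the paper) which uses the original norm's AHUMIP to feed the relevant triples into $\A_{\alpha,n,k}$.

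Second, in your density step the conversion of the original-norm separating ball into a $B'$-ball fails if the center is kept fixed: from $\inf f(z_0+\rho B(X))\geq\gamma$ you cannot conclude anything positive about $\inf f\bigl(z_0+(\rho/\eta)B'\bigr)$, since $(\rho/\eta)B'=(\rho/\eta)B+\rho B(X)$ is far larger than $\rho B(X)$ and $f$ drops by the full radius $\rho/\eta$ over it. The paper resolves this by re-centering: it writes $z_0+r_0B(X)= z_0-(r_0/\del)x+(r_0/\del)(x+\del B(X))\subseteq z_0-(r_0/\del)x+(r_0/\del)B$, where $x\in B_0$ is chosen so that $f(x)+\|f\|_{B_0}$ is small, i.e., $x$ nearly minimizes $f$ on $B_0$, so that the inflation happens in a direction along which $f$ barely decreases. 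This re-centering is a substantive idea, not the ``quantitative bookkeeping'' your sketch defers; without it, or without the paper's two-class architecture that makes openness and density tractable, the argument does not go through.
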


\begin{proof}
For $n, k \in \N$ and $0 <\alpha <1/2n$, consider the following object:
\beqa
\A_{\alpha, n, k} &:=& \{(A, f, Z) : A \subseteq X \mbox{ closed convex}, \, f \in S(X^*) \mbox{ and } Z \in \Z \mbox{ with} \\
&& f \in Z \mbox{ such
that there exists a closed ball } z_0 + r_0 B(X) \\
&& \mbox{satisfying }r_0 \leq k, \sup_{a \in A}\|z_0-a\|_Z \leq r_0 \mbox{ and}\\
&& \inf f(z_0 + r_0 B(X)) \geq 1/n -\alpha\}.
\eeqa
and
\beqa
\B_{n, k} &:=& \{B \in \B : \mbox{there exists } 0 <\alpha, \gamma <1/2n, K \geq 1 \mbox{ such that for any}\\
&& f \in S(X^*, \|\cdot\|^*_B), \mbox{ closed and convex } A \subseteq X \mbox{ and } Z \in \Z\\
&& \mbox{containing } f \mbox{ with }
(A, f/\|f\|, Z) \in \A_{\alpha, n, k}, \mbox{ there is a closed ball}\\
&& z_1 + r_1 B, r_1 \leq K \mbox{ satisfying } \sup_{a \in A}\|a-z_1\|_{Z, B} \leq r_1 \mbox{ and}\\
&& \inf f(z_1 + r_1 B) \geq \gamma\}.
\eeqa
Let \[
\B_0=\bigcap_{n=1}^{\infty}\bigcap_{k=1}^{\infty}\B_{n, k}.
\]

\textsc{Claim 1:} If $B \in \B_0$, then $(X, \|\cdot\|_B)$ has AHUMIP.

Let $\e >0$ and $M \geq 1$ be given. Let $h(B, B(X)) <\beta$.
Choose $M_1 \geq M(1+\beta)$ and $0 <\e_1 < \e/(1+\beta)$.

Since $(X, \|\cdot\|)$ has AHUMIP, there exist $\gamma, K >0$ that work for $\e_1, M_1$.

Let $n, k \in \N$ be such that $\gamma >1/n$, $K \leq k$.
Now, $B \in \B_0$. So, $B \in \B_{n, k}$.

Thus, there exist $0 <\alpha_1, \gamma_1 <1/2n$, $K_1 >0$ that satisfy the $\B_{n, k}$ conditions for $B$.

Let $f \in S(X^*, \|\cdot\|_B^*)$. Then, there exists $Z \in \Z$ containing $f$ that works for $f/\|f\|$ in $(X, \|\cdot\|)$ with $\e_1, M_1$ and $\gamma, K >0$.

Now, let $A$ be a closed convex set with $\sup_{a \in A}\|a\|_{Z, B} \leq M$ and $\inf f(A) \geq \e$.
So, $\sup_{a \in A}\|a\|_{Z} \leq M_1$ and $\inf (f/\|f\|)(A) \geq \e_1$.

So, there exists a closed ball $z_0 + r_0 B(X)$ satisfying $\sup_{a \in A}\|a-z_0\|_{Z} \leq r_0$, $r_0 \leq K \leq k$ and $\inf (f/\|f\|)(z_0 + r_0 B(X)) \geq \gamma >1/n-\alpha_1$.

So, $(A, f/\|f\|, Z) \in \A_{\alpha_1, n, k}$. And the $\B_{n, k}$ conditions imply that there exists a closed ball $z_1 + r_1 B$ satisfying $\sup_{a \in A}\|a-z_1\|_{Z, B} \leq r_1$, $r_1 \leq K_1$ and $\inf f(z_1 + r_1 B)\geq \gamma_1$.

So, $(X, \|\cdot\|_B)$ has AHUMIP.

\textsc{Claim 2:} For each $n, k \in \N$, $\B_{n, k}$ is open in $\B$.

Let $n, k \in \N$ and $B_0 \in \B_{n, k}$.

There exists $0 <\alpha, \gamma <1/2n$, $K >0$ satisfying the $\B_{n, k}$ conditions for $B_0$. Clearly $\alpha, \gamma <1/2$.

Let $B \in \B$ be such that $h(B, B_0) <\gamma/4K$. We will show that $B \in \B_{n, k}$.
Let $K_1=K+1$ and $\gamma_1=\frac{\gamma}{(4(1+\gamma/4K))}$.

Let $f \in S(X^*, \|\cdot\|_B^*)$, $A$ be closed convex set and $Z \in \Z$ containing $f$ such that $(A, f/\|f\|, Z) \in \A_{\alpha, n, k}$. Now, $B_0 \in \B_{n, k}$. Thus, there exists a closed ball $z_1 +r_1 B_0$ with $r_1 \leq K$, $\sup_{a \in A}\|a-z_1\|_{Z, B_0} \leq r_1$ and $\inf (f/\|f\|_{B_0})(z_1 +r_1 B_0) \geq \gamma$.
So, $f(z_1) \geq \frac{r_1+\gamma}{1+\gamma/4K}$.

Now, $z_1+r_1 B_0 \subseteq z_1+r_1(1+\gamma/4K)B$. Also, $\sup_{a \in A}\|a-z_1\|_{Z, B} \leq r_1(1+\gamma/4K)$. We have, $r_1(1+\gamma/4K) \leq K+1=K_1$.
And,
\beqa
\inf f(z_1 + r_1(1+\gamma/4K)B) & = & f(z_1)-r_1(1+\gamma/4K)\\
& \geq & \frac{r_1+\gamma}{1+\gamma/4K} - r_1(1+\gamma/4K)\\
& \geq & \frac{\gamma}{1+\gamma/4K} - K\left(1+\gamma/4K - \frac{1}{1+\gamma/4K}\right)\\
& \geq & \frac{\gamma}{(4(1+\gamma/4K))} = \gamma_1.
\eeqa
So, $B \in \B_{n, k}$.

\textsc{Claim 3:} For each $n, k \in \N$, $B_{n, k}$ is dense in $\B$.

Let $B_0 \in \B$ and $\e >0$ be given. So, we need to show that there exists $B \in \B_{n, k}$ such that $h(B, B_0) <\e$.

Let $\lambda > h(B_0, B(X))$ and let $\del = \e/2(1+\lambda)$.
Consider
\[
B = \ol{B_0 + \del B(X)}.
\]
It follows that
\[
B_0 \subseteq B = \ol{B_0+\del B(X)} \subseteq B_0+\del(1+\lambda)B_0=(1+\e/2)B_0
\]
and hence,
\[
h(B_0, B) \leq \e/2 < \e.
\]

Also, $h(B, B(X)) < \e + \lambda = \beta$ (say). So, $\frac{1}{1+\beta}\|\cdot\|_B \leq \|\cdot\| \leq (1+\beta)\|\cdot\|_B$.

We claim that $B \in \B_{n, k}$.

Let $\alpha=1/2n$, $\gamma=1/4n(1+\beta), K=k\left(1+\beta+\frac{1}{\del}\right)$.

Let $f \in S(X^*, \|\cdot\|_{B}^*)$, $A \subseteq X$ be closed convex and $Z \in \Z$ such that $(A, f/\|f\|, Z) \in \A_{\alpha, n, k}$. So, there exists a closed ball $z_0 + r_0 B(X)$ such that $r_0 \leq k$, $\sup_{a \in A}\|a-z_0\|_{Z}\leq r_0$ and $\inf (f/\|f\|)(z_0 + r_0 B(X)) \geq 1/2n$.

So, $\inf f(z_0 + r_0 B(X)) \geq 1/2n(1+\beta)$.

Let $x \in B_0$ be such that
\[
\frac{K}{\del}(f(x) + \|f\|_{B_0}) < 1/4n(1+\beta).
\]
Then,
\[
z_0 + r_0 B(X) = z_0 - \frac{r_0}{\del}x +\frac{r_0}{\del}(x + \del B(X)) \subseteq z_0 - \frac{r_0}{\del}x +\frac{r_0}{\del}B
\]
Now, $\|a-z_0+(r_0/\del)x\|_{Z, B} \leq r_0(1+\beta)+ r_0/\del \leq K$. Also,
\beqa
\inf f(z_0 - \frac{r_0}{\del}x + \frac{r_0}{\del}B) & = & \inf f(z_0 + r_0 B(X)) + \frac{r_0}{\del} \inf f(B_0) - \frac{r_0}{\del}f(x)\\
& \geq & \frac{1}{2n(1+\beta)} - \frac{r_0}{\del}\|f\|_{B_0} - \frac{r_0}{\del}f(x)\\
& \geq & \frac{1}{2n(1+\beta)} - \frac{K}{\del}(\|f\|_{B_0}+f(x))\\
& > & \frac{1}{2n(1+\beta)} - \frac{1}{4n(1+\beta)}
= \frac{1}{4n(1+\beta)}
\eeqa

So, $B \in \mathcal{B}_{n, k}$.
\end{proof}

\begin{corollary}
If $(X, \|\cdot\|)$ is AUS, then the set of equivalent norms which are AUS is residual in the set of all equivalent norms.
\end{corollary}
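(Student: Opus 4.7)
The plan is to derive the corollary by chaining together Theorem~\ref{thm1} and Theorem~\ref{thm2}, applied in both directions. First I would use the equivalence (a)~$\Longleftrightarrow$~(e) of Theorem~\ref{thm1} with the original norm to translate the AUS hypothesis into the statement that $X$ has AHUMIP. This is the only place the hypothesis on $(X,\|\cdot\|)$ enters.

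Next, I would invoke Theorem~\ref{thm2}: since $X$ has AHUMIP in its original norm, there exists a dense $G_{\del}$-subset $\B_0 \subseteq \B$ such that $(X, \|\cdot\|_B)$ has AHUMIP for every $B \in \B_0$. Finally, I would apply the equivalence from Theorem~\ref{thm1} once more, this time in the direction (e)~$\Longrightarrow$~(a), to each such renorming, concluding that $(X, \|\cdot\|_B)$ is AUS for every $B \in \B_0$. Since $\B_0$ is residual in the complete metric space $(\B, h)$, the set of equivalent norms on $X$ that are AUS contains a dense $G_{\del}$ and is therefore residual, as desired.

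There is no substantive obstacle: the corollary is a formal two-step consequence, with all the analytic content already absorbed into the ball-separation characterisation of AUS (Theorem~\ref{thm1}) and the residuality of AHUMIP (Theorem~\ref{thm2}). The only small point worth flagging is that the characterisation in Theorem~\ref{thm1} must be invoked for the Banach space $X$ equipped first with $\|\cdot\|$ and then, separately, with each $\|\cdot\|_B$; since the equivalence is a property of the normed space itself and does not depend on the ambient equivalence class, this raises no difficulty.
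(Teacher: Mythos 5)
Your proposal is correct and is exactly the paper's intended argument: the corollary is stated immediately after Theorem~\ref{thm2} without a separate proof, being the formal composition of the equivalence (a)~$\Longleftrightarrow$~(e) of Theorem~\ref{thm1} with the residuality of AHUMIP from Theorem~\ref{thm2}. (Note only that item~(e) of Theorem~\ref{thm1} is misstated as ``$X^*$ is AHUMIP''; the definition and the proof both attribute AHUMIP to $X$, as you correctly do.)
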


Now, by the arguments in the proof of \cite[Remark 4.5]{DKLR}, we have residuality of AUC norms. Hence, we obtain the following corollary which was proved in \cite{DKLR} only for reflexive spaces.

\begin{corollary}
If $X$ has an equivalent AUC norm and an equivalent AUS norm, then there is an equivalent norm which is both AUC and AUS.
\end{corollary}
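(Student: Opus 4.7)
The plan is to combine two residuality statements via the Baire category theorem in the complete metric space $(\B, h)$. First, by the hypothesis $X$ admits an equivalent AUC norm, so by the arguments in \cite[Remark 4.5]{DKLR} (invoked in the text just before the corollary), the collection
\[
\B_{\mathrm{AUC}} := \{B \in \B : (X, \|\cdot\|_B) \text{ is AUC}\}
\]
is a dense $G_\delta$ subset of $(\B, h)$. Second, since $X$ admits an equivalent AUS norm, the preceding corollary (which is the immediate consequence of Theorem~\ref{thm2}) asserts that
\[
\B_{\mathrm{AUS}} := \{B \in \B : (X, \|\cdot\|_B) \text{ is AUS}\}
\]
is also a dense $G_\delta$ subset of $(\B, h)$.

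Next, I would invoke the fact, recalled in the preliminaries, that $(\B, h)$ is a complete metric space, so the Baire category theorem applies: the intersection of two dense $G_\delta$ subsets is again a dense $G_\delta$ subset, and in particular non-empty. Choosing any $B \in \B_{\mathrm{AUC}} \cap \B_{\mathrm{AUS}}$ yields an equivalent norm $\|\cdot\|_B$ on $X$ which is simultaneously AUC and AUS, as required.

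The only mild subtlety is to make sure that the residuality from \cite{DKLR} and the residuality coming from Theorem~\ref{thm2} are taken with respect to the same metric; this is addressed in the preliminaries, where it is noted that the Hausdorff metric $h$ used here is equivalent to the metric of \cite{DGZ, FZZ} by \cite{Ge}, so dense $G_\delta$ sets coincide in the two topologies. Beyond this bookkeeping, the argument is a direct application of the Baire category theorem and so presents no real obstacle.
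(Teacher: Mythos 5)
Your argument is correct and is exactly the one the paper intends: the corollary follows by intersecting the dense $G_\delta$ set of AUC norms (obtained via the arguments of \cite[Remark 4.5]{DKLR}) with the dense $G_\delta$ set of AUS norms from Theorem~\ref{thm2}, using the completeness of $(\B, h)$ and the Baire category theorem. Your remark about the equivalence of the metrics is a sensible piece of bookkeeping that the paper handles the same way in the preliminaries.
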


In the following two subsections we are going to talk about the residuality of UMIP and uniformly smooth norms. Most of the techniques in the proof are similar to that of the AUS case. So, we will skip the proof in these cases.

\subsection{Residuality of uniformly smooth norms}
Coming to uniform smoothness, we note that \cite[Theorem 4.7]{Go} characterised it in terms of ``the hyperplane-uniform Mazur intersection property (H-UMIP)''.

\begin{definition} \rm \rm \cite{Go}
We say that $X$ has H-UMIP if for every $\e >0$ and $M \geq 1$, there is $K(\e) > 0$ such that whenever a closed convex set $C\subseteq X$ and $f \in S(X^*)$ are such that $C \subseteq B[0, M]$ and $\inf f(C) \geq \e$, there is a closed ball $B[x_0, r_0]$ in $X$ such that $C \subseteq B[x_0, r_0]$, $\inf f(B[x_0, r_0]) \geq \e/2$ and $r_0 \leq K$.
\end{definition}

Following lemma can be obtained by slight perturbation of the quantities involved in H-UMIP, hence we would skip its proof.
\begin{lemma} \label{l3}
$X$ has H-UMIP if and only if for every $\e >0$ and $M \geq 1$, there exists $0 < \gamma_0, \gamma_1 \leq \e/2$, $K >0$ such that for every closed convex set $\sup\{\|c\| : c\in C\} \leq M$ and $f \in S(X^*)$ satisfying $\inf f(C) \geq \e-\gamma_0$, there exists $z_0 \in X$ and $r_0 \leq K$ with $C\subseteq B[z_0, r_0]$ and $f(z_0)-r_0 \geq \gamma_1$.
\end{lemma}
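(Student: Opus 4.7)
The proof is an equivalence between two quantitative formulations of the same ball separation property, so I will handle each direction in turn. For the forward implication, assume $X$ has H-UMIP and fix $\e > 0$, $M \geq 1$. Applying H-UMIP with the parameter $\e/2$ (keeping $M$) yields a constant $K > 0$ so that any closed convex $C \subseteq B[0, M]$ with $\inf f(C) \geq \e/2$ sits inside a ball $B[x_0, r_0]$ with $r_0 \leq K$ and $\inf f(B[x_0, r_0]) \geq \e/4$. Setting $\gamma_0 = \e/2$ and $\gamma_1 = \e/4$, both lying in $(0, \e/2]$, matches the perturbed formulation exactly: the hypothesis $\inf f(C) \geq \e - \gamma_0 = \e/2$ is exactly when the chosen instance of H-UMIP applies, and $f(z_0) - r_0 = \inf f(B[z_0, r_0]) \geq \gamma_1$ is the promised conclusion.

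The backward direction is where the real content sits, and the difficulty is that the perturbed condition supplies only some positive $\gamma_1 \leq \e/2$, whereas H-UMIP demands the specific threshold $\e/2$ in its conclusion. My plan is to close this gap by a thickening argument. Given $\e, M$ and a closed convex $C \subseteq B[0, M]$ with $\inf f(C) \geq \e$, set
\[
C' := \overline{C + (\e/2) B[0, 1]}.
\]
Then $C'$ is closed convex with $\sup\{\|c'\| : c' \in C'\} \leq M + \e/2$, and because $\|f\| = 1$, $\inf f(C') \geq \e - \e/2 = \e/2$.

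I now apply the perturbed version at the parameters $(\e/2,\, M + \e/2)$ to obtain $\gamma_0, \gamma_1 \in (0, \e/4]$ and $K > 0$. Since $\inf f(C') \geq \e/2 > \e/2 - \gamma_0$, the hypothesis delivers a ball $B[z_0, r_0]$ with $r_0 \leq K$, $C' \subseteq B[z_0, r_0]$, and $f(z_0) - r_0 \geq \gamma_1$. The inclusion $B[c, \e/2] \subseteq C' \subseteq B[z_0, r_0]$ for every $c \in C$ forces $\|c - z_0\| \leq r_0 - \e/2$, so $C \subseteq B[z_0, r_0 - \e/2]$ (in particular $r_0 \geq \e/2$), and
\[
\inf f(B[z_0, r_0 - \e/2]) \;=\; f(z_0) - r_0 + \e/2 \;\geq\; \gamma_1 + \e/2 \;>\; \e/2,
\]
which is exactly H-UMIP at $(\e, M)$ with constant $K$.

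The only non-routine step is this thickening bookkeeping: inflating $C$ by $(\e/2)B[0,1]$ costs $\e/2$ in the input hypothesis on $\inf f$, but the corresponding contraction of the output ball's radius by $\e/2$ refunds the same $\e/2$ in the output conclusion's $\inf f$, which is precisely the shift needed to promote the unspecified positive $\gamma_1$ up to the required $\e/2$ threshold. Every other adjustment is a direct substitution of constants.
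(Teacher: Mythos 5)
Your proof is correct. The paper itself omits the argument, saying only that the lemma ``can be obtained by slight perturbation of the quantities involved in H-UMIP''; your forward direction is exactly such a perturbation (taking $\gamma_0=\e/2$, $\gamma_1=\e/4$ and the H-UMIP constant for $\e/2$), and your backward direction supplies the genuinely non-trivial step via the thickening $C'=\overline{C+(\e/2)B[0,1]}$, which is the same device the paper uses elsewhere (the set $D=\overline{C+\tfrac{3\e}{4}B(X)}$ in the proof of Theorem~\ref{thm1}, $(d)\Rightarrow(e)$). The key bookkeeping --- that $B[c,\e/2]\subseteq B[z_0,r_0]$ forces $\|c-z_0\|\le r_0-\e/2$, so shrinking the radius by $\e/2$ promotes the unspecified $\gamma_1>0$ to the required threshold $\gamma_1+\e/2>\e/2$ --- is valid in any Banach space since $\sup_{\|u\|\le 1}\|v+(\e/2)u\|=\|v\|+\e/2$.
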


The residuality of H-UMIP norms follows from techniques similar to that for the AHUMIP in Theorem ~\ref{thm2}. We present only the class of sets required in the proof.
 \begin{theorem} \label{thm3}
If $(X, \|\cdot\|)$ has H-UMIP, then there is a dense $G_\del$ subset $\B_0'$ of $\B$ such that $(X, \|\cdot\|_{B})$ has H-UMIP for each $B \in \B_0'$.
\end{theorem}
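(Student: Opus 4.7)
The plan is to adapt the three-claim strategy of Theorem~\ref{thm2} essentially verbatim, with all occurrences of the finite-codimensional $Z$ and the seminorm $\|\cdot\|_Z$ suppressed, since H-UMIP involves no such subspaces. The appropriate class of sets is: for $n, k \in \N$ and $0 < \alpha < 1/2n$,
\beqa
\A'_{\alpha, n, k} &:=& \{(A, f) : A \subseteq X \mbox{ closed convex}, \, f \in S(X^*),\\
&& \mbox{and there exists a closed ball } z_0 + r_0 B(X) \mbox{ with}\\
&& r_0 \leq k, \, \sup_{a \in A}\|z_0 - a\| \leq r_0, \mbox{ and}\\
&& \inf f(z_0 + r_0 B(X)) \geq 1/n - \alpha\};
\eeqa
and
\beqa
\B'_{n, k} &:=& \{B \in \B : \mbox{there exist } 0 < \alpha, \gamma < 1/2n, \, K \geq 1 \mbox{ such that}\\
&& \mbox{whenever } f \in S(X^*, \|\cdot\|^*_B) \mbox{ and closed convex } A \subseteq X \\
&& \mbox{satisfy } (A, f/\|f\|) \in \A'_{\alpha, n, k}, \mbox{ there is a closed ball}\\
&& z_1 + r_1 B \mbox{ with } r_1 \leq K, \, \sup_{a \in A}\|a - z_1\|_B \leq r_1,\\
&& \mbox{and } \inf f(z_1 + r_1 B) \geq \gamma\}.
\eeqa
I then set $\B_0' := \bigcap_{n=1}^{\infty}\bigcap_{k=1}^{\infty}\B'_{n, k}$.

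The argument proceeds through three claims paralleling those in Theorem~\ref{thm2}. First, I would show that each $B \in \B_0'$ endows $X$ with H-UMIP: given $\e > 0$, $M \geq 1$ and $\beta$ with $h(B, B(X)) < \beta$, apply H-UMIP of $(X, \|\cdot\|)$ in the perturbed form of Lemma~\ref{l3} for $\e_1 := \e/(1+\beta)$ and $M_1 := M(1+\beta)$ to get constants $\gamma, K$; choose $n, k$ with $1/n < \gamma$ and $k \geq K$; the ball supplied by H-UMIP of $(X,\|\cdot\|)$ witnesses $(A, f/\|f\|) \in \A'_{\alpha, n, k}$ for the relevant $A, f$, and then the $\B'_{n, k}$-property supplies the desired $\|\cdot\|_B$-ball, with the equivalence constants absorbed via Lemma~\ref{l4}. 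Second, $\B'_{n, k}$ is open: given $B_0 \in \B'_{n, k}$ with parameters $\alpha, \gamma, K$, any $B$ with $h(B, B_0) < \gamma/4K$ also lies in $\B'_{n, k}$ with new parameters $K_1 := K+1$ and $\gamma_1 := \gamma/(4(1 + \gamma/4K))$; the inclusion $z_1 + r_1 B_0 \subseteq z_1 + r_1(1 + \gamma/4K) B$ together with the estimate $f(z_1) \geq (r_1 + \gamma)/(1 + \gamma/4K)$ transfers the ball, exactly as at the end of Claim~2 in Theorem~\ref{thm2}. Third, $\B'_{n, k}$ is dense: given $B_0 \in \B$ and $\e > 0$, pick $\lambda > h(B_0, B(X))$, $\del := \e/2(1+\lambda)$, and take $B := \overline{B_0 + \del B(X)}$, so $h(B_0, B) < \e$ and $h(B, B(X)) < \e + \lambda =: \beta$. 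With $\alpha := 1/2n$, $\gamma := 1/4n(1+\beta)$, and $K := k(1 + \beta + 1/\del)$, and an auxiliary $x \in B_0$ chosen so that $(K/\del)(f(x) + \|f\|_{B_0}) < 1/4n(1+\beta)$, the translated ball $z_0 - (r_0/\del)x + (r_0/\del)B$ witnesses $B \in \B'_{n, k}$, precisely as in Claim~3 of Theorem~\ref{thm2}.

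Since $(\B, h)$ is a complete metric space, $\B_0'$ is a dense $G_\del$ by the Baire category theorem, which completes the proof. The step I expect to require the most bookkeeping care is Claim~2: the perturbation from $\|\cdot\|_{B_0}$-balls to $\|\cdot\|_B$-balls must simultaneously inflate the radius by at most the tolerance $1+\gamma/4K$ and maintain the lower bound on $f$ above the diminished threshold $\gamma_1$. The absence of the $Z$-constraint (compared with the AHUMIP case) actually simplifies matters, since there is no seminorm size to track separately, and all estimates reduce cleanly to the equivalence constants $(1+\beta)^{\pm 1}$.
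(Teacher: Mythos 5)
Your proposal is correct and follows essentially the same route as the paper: the paper likewise defines $\B_0'$ as a countable intersection of classes $\B_{n,k}'$ and runs the three-claim Baire-category argument of Theorem~\ref{thm2} with the subspace $Z$ suppressed. The only (immaterial) difference is in how the membership condition is packaged: you keep the auxiliary class $\A'_{\alpha,n,k}$ recording the existence of a reference-norm ball, exactly as in the AHUMIP case, whereas the paper's $\B_{n,k}'$ uses directly the hypothesis ``$A \subseteq B[0,k]$ and $\inf f(A) \geq 1/n - \gamma_0$'' from the perturbed form of H-UMIP in Lemma~\ref{l3}; both encodings support the same openness, density and Claim~1 arguments.
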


\begin{proof} For $n, k \in \mathbb{N}$, $k \geq 2$, let
\beqa
\mathcal{B}_{n, k}' &:=& \{B \in \mathcal{B} : \mbox{ there exist } 0 < \gamma_0, \gamma_1 < 1/2n \mbox{ and } K > 0 \mbox{ such that for}\\
&& \mbox{every } A \mbox{ closed, convex with } A \subseteq B[0, k], \mbox{ and } f \in S(X^*, \|\cdot\|^*_B)\\
&& \mbox{with } \inf f(A) \geq 1/n-\gamma_0, \mbox{we have } z_0\in X, \, r\leq K, \mbox{ so that}\\
&& A \subseteq z_0 +rB, \mbox{ and } \inf f(z_0 +rB) >\gamma_1\}.
\eeqa

As in Theorem ~\ref{thm2}, we can show that
\[
\B_0'=\bigcap_{n=1}^{\infty}\bigcap_{k=1}^{\infty}\B_{n, k}'
\]
is a $G_{\delta}$ dense subset of $\B$ and each element of $\B_0'$ has H-UMIP.
\end{proof}

\begin{corollary} \label{cor}
If $(X, \|\cdot\|)$ is uniformly smooth, then the set of equivalent uniformly smooth norms is residual in the set of all equivalent norms on $X$.
\end{corollary}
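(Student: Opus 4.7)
The plan is to lift the conclusion directly from Theorem~\ref{thm3} via the characterisation of uniform smoothness in terms of H-UMIP. Recall from \cite[Theorem 4.7]{Go} that for any Banach space $(Y, \|\cdot\|_Y)$, the norm $\|\cdot\|_Y$ is uniformly smooth if and only if $(Y, \|\cdot\|_Y)$ has H-UMIP. Crucially, this equivalence is intrinsic to the pair (space, norm), so it applies uniformly for each equivalent renorming of $X$.

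Carrying this out, first I would use the hypothesis that $(X, \|\cdot\|)$ is uniformly smooth together with \cite[Theorem 4.7]{Go} to conclude that $(X, \|\cdot\|)$ has H-UMIP. This puts us in position to apply Theorem~\ref{thm3}, which supplies a dense $G_\delta$ subset $\B_0' \subseteq \B$ such that $(X, \|\cdot\|_B)$ has H-UMIP for each $B \in \B_0'$. A second application of \cite[Theorem 4.7]{Go}, now in the converse direction and separately for each $B \in \B_0'$, translates the H-UMIP conclusion back into uniform smoothness of $\|\cdot\|_B$. Since $\B_0'$ is residual in $(\B, h)$, the set of equivalent uniformly smooth norms on $X$ is residual as well.

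No genuine obstacle appears at the level of the corollary: all the mathematical content sits in Theorem~\ref{thm3}, and this last step is simply a translation through an intrinsic biconditional characterisation. The only minor bookkeeping point is that the metric $h$ on $\B$ is the correct ambient topology for phrasing residuality of equivalent norms, but this has already been addressed in the preliminaries (where it is noted that $(\B, h)$ is complete and that this metric is equivalent to the one used in \cite{DGZ, FZZ}). Consequently, the corollary reduces to a one-line invocation of Theorem~\ref{thm3} combined with the cited characterisation.
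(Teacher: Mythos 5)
Your proposal is correct and is exactly the derivation the paper intends: Corollary~\ref{cor} is stated immediately after Theorem~\ref{thm3} precisely because the equivalence ``uniformly smooth $\iff$ H-UMIP'' from \cite[Theorem 4.7]{Go} (quoted at the start of that subsection) transfers the residuality of H-UMIP norms to residuality of uniformly smooth norms. Nothing further is needed.
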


Notice that if $B \in \B$ then the corresponding dual ball $B^* = B^\circ$, the polar of $B$. It follows that if $B_1, B_2 \in \B$, then $h(B^*_1, B^*_2) = h(B_1, B_2)$.

Since in the dual of a reflexive space, any equivalent norm on $X^*$ is a dual norm, we have the following observation:
\begin{remark} \label{rmk} \rm
If $\B_0$ is residual in $\B$ on a reflexive space, then the corresponding set $\B_0^*$ of dual norms is residual in the set of equivalent norm on $X^*$, and vice-versa.
\end{remark}

\begin{corollary}
If $(X, \|\cdot\|)$ is uniformly convex, then the set of equivalent uniformly convex norms is residual in the set of all equivalent norms on $X$.
\end{corollary}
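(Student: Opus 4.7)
The plan is to deduce this from the already-established residuality of uniformly smooth norms (Corollary~\ref{cor}) via the classical duality. Since $(X, \|\cdot\|)$ is uniformly convex, the Milman--Pettis theorem ensures that $X$ is reflexive; moreover, for any $B \in \B$, the norm $\|\cdot\|_B$ on $X$ is uniformly convex if and only if the dual norm $\|\cdot\|_B^*$ on $X^*$ is uniformly smooth.

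First, I would apply Corollary~\ref{cor} to $(X^*, \|\cdot\|^*)$, which is uniformly smooth by the duality above. This yields a dense $G_\del$ subset $\B_0^*$ of equivalent norms on $X^*$, each of which is uniformly smooth. Next, I would invoke Remark~\ref{rmk} with the roles of $X$ and $X^*$ interchanged; this is legitimate because $X$ is reflexive, whence $X^{**} = X$ isometrically and every equivalent norm on $X^*$ is a dual norm of some equivalent norm on $X$. The remark then transfers the residuality from $\B_0^*$ on $X^*$ back to the set $\B_0 \subseteq \B$ of predual norms, which is therefore residual in $\B$. By the duality recalled above, each norm in $\B_0$ is uniformly convex, proving the corollary.

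There is no substantive obstacle: the bulk of the work was done in Corollary~\ref{cor}, and everything else reduces to combining the classical uniform convexity--uniform smoothness duality with the symmetric statement of Remark~\ref{rmk}. The only point worth double-checking is that the Hausdorff metric on $\B(X)$ and the corresponding metric on $\B(X^*)$ correspond under polarity, which is precisely the identity $h(B_1^*, B_2^*) = h(B_1, B_2)$ observed just before Remark~\ref{rmk}.
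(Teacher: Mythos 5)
Your proposal is correct and is precisely the argument the paper intends: the corollary is placed immediately after Corollary~\ref{cor} and Remark~\ref{rmk}, and is meant to follow by applying the uniform smoothness result to $X^*$ (using Milman--Pettis reflexivity and the fact that every equivalent norm on the dual of a reflexive space is a dual norm) and pulling the residual set back through the polarity isometry $h(B_1^*, B_2^*) = h(B_1, B_2)$. No substantive difference from the paper's route.
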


\begin{corollary}
If $X$ is super-reflexive, then there exists a norm on $X$ which is both uniformly convex and uniformly smooth.
\end{corollary}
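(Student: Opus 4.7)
The plan is a short Baire category argument that combines the two preceding residuality corollaries. The first step is to note that, under the super-reflexivity hypothesis, both hypotheses of those corollaries are simultaneously satisfied: by Enflo's renorming theorem $X$ admits an equivalent uniformly convex norm, and since super-reflexivity is self-dual, $X^*$ is super-reflexive too, hence also admits an equivalent uniformly convex norm, whose predual on $X$ is uniformly smooth. So $X$ carries equivalent uniformly convex and equivalent uniformly smooth norms (separately).

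Next, I would fix any reference equivalent norm so that $(\B, h)$ is a complete metric space. By Corollary~\ref{cor}, the subset
\[
\B_{US} := \{B \in \B : \|\cdot\|_B \text{ is uniformly smooth}\}
\]
is a dense $G_\del$ in $(\B, h)$, and by the residuality corollary for uniformly convex norms (obtained from Corollary~\ref{cor} applied on $X^*$ via Remark~\ref{rmk} and the standard UC--US duality), the subset
\[
\B_{UC} := \{B \in \B : \|\cdot\|_B \text{ is uniformly convex}\}
\]
is likewise a dense $G_\del$ in $(\B, h)$. Both assumptions in those corollaries -- namely, that $X$ has at least one uniformly smooth (resp.\ uniformly convex) equivalent norm -- are guaranteed by the first paragraph.

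Finally, the Baire category theorem applied to the complete metric space $(\B, h)$ gives that $\B_{US} \cap \B_{UC}$ is itself a dense $G_\del$ subset of $\B$, and in particular non-empty. Any $B \in \B_{US} \cap \B_{UC}$ furnishes an equivalent norm $\|\cdot\|_B$ on $X$ that is simultaneously uniformly convex and uniformly smooth, which is what the corollary asserts. There is essentially no obstacle in this last step; all the heavy lifting has been done in Theorems~\ref{thm2} and~\ref{thm3} (and their transfer for uniform convexity via Remark~\ref{rmk}), with Enflo's renorming characterisation of super-reflexivity merely serving to certify that both residuality statements have non-trivial scope in our setting.
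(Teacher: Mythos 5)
Your proposal is correct and follows essentially the same route the paper intends: super-reflexivity supplies equivalent uniformly convex and uniformly smooth norms, the two preceding residuality corollaries make each of the corresponding subsets of $\B$ a dense $G_\del$, and the Baire category theorem in the complete metric space $(\B, h)$ yields a norm in the intersection. The only point worth noting is that the transfer of residuality for uniform convexity via the dual-ball remark requires reflexivity, which you correctly have from super-reflexivity.
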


\subsection{Residuality of norms with UMIP}

\begin{definition} \label{UMIP} \rm \cite{WZ}
We say that a Banach space $X$ has the \emph{Uniform Mazur Intersection Property (UMIP)} if for every $\e >0$ and $M(\e) \geq 2$, there is $K(\e) > 0$ such that whenever a closed convex set $C\subseteq X$ and a point $p\in X$ are such that $diam(C) \leq M(\e)$ and $d(p, C) \geq \e$, there is a closed ball $B \subseteq X$ of radius $\leq K(\e)$ such that $C \subseteq B$ and $d(p, B) \geq \e/2$.
\end{definition}

The proof of following lemma is an easy adaptation of the proof of \cite[Theorem 2.11]{BGG}. We skip the details.

\begin{lemma}
$X$ has UMIP if and only if for every $\e >0$ and $M \geq 2$, there exist $K >0$ and $0 < \delta \leq \e$ such that for every closed, bounded, convex $A \subseteq X$ with $d(0, A) \geq \e$ and $diam(A) \leq M$, we have a closed ball $B[z_0, r]$ with $r \leq K$ such that $A \subseteq B[z_0, r]$ and $d(0, B[z_0, r]) \geq \delta$.
\end{lemma}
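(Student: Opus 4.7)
The plan is to reduce both directions to a translation of the configuration and to handle the quantitative gap between the stated $\del$ and the $\e/2$ in the definition of UMIP via a fattening trick. The forward direction is essentially tautological: assuming UMIP, I would specialise its definition to the point $p = 0$ with the given $\e, M$; the UMIP constant $K(\e)$ and the separation $\e/2$ immediately yield the data required by the lemma, with $\del := \e/2 \leq \e$.

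The real content is in the backward direction. Given $\e > 0$, $M \geq 2$, a closed convex $C \subseteq X$ with $diam(C) \leq M$, and $p \in X$ with $d(p, C) \geq \e$, my first move would be to translate, setting $A := C - p$, so that $A$ is closed bounded convex with $d(0, A) \geq \e$ and $diam(A) \leq M$. Applying the hypothesis directly to $A$ would yield a ball of radius $\leq K$ containing $A$ at distance $\geq \del > 0$ from the origin; translating back would give a ball containing $C$ at distance $\del$ from $p$. But $\del$ could in principle be strictly smaller than $\e/2$, so this alone is insufficient.

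To recover the $\e/2$, following the spirit of \cite[Theorem 2.11]{BGG}, I would apply the hypothesis not to $A$ itself but to the $(\e/2)$-fattening $A' := \ol{A + (\e/2) B(X)}$, which is still closed bounded convex and satisfies $d(0, A') \geq \e/2$ and $diam(A') \leq M + \e$. The hypothesis with parameters $(\e/2, M + \e)$ then produces a ball $B[w, r']$ of radius $r' \leq K(\e/2, M + \e)$ containing $A'$ with $\|w\| - r' \geq \del(\e/2, M + \e) > 0$. Since $a + (\e/2) B(X) \subseteq B[w, r']$ for every $a \in A$, a one-line triangle inequality (take the boundary point of $B[a, \e/2]$ farthest from $w$) forces $\|a - w\| \leq r' - \e/2$, so $A \subseteq B[w, r' - \e/2]$ and $d(0, B[w, r' - \e/2]) = \|w\| - (r' - \e/2) \geq \del(\e/2, M+\e) + \e/2 > \e/2$. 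Translating back, the ball $B[w + p, r' - \e/2]$ is the one required by UMIP, with radius bounded by $K(\e/2, M+\e)$.

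I expect the principal obstacle to be recognising this fattening/shrinking manoeuvre: once one sees that enlarging $A$ by a ball of radius $\e/2$ converts the weak separation $\del$ into exactly the $\e/2$ separation demanded by UMIP, at the cost of the enlarged parameters $(\e/2, M + \e)$, the remaining bookkeeping is routine translation invariance plus elementary inclusions among closed balls.
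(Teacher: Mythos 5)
Your argument is correct: the forward direction is indeed immediate with $\del=\e/2$, and your fattening of $A$ by $(\e/2)B(X)$ followed by shrinking the resulting ball's radius by $\e/2$ correctly upgrades the a priori weak separation $\del$ to the $\e/2$ demanded by UMIP, at the harmless cost of passing to the parameters $(\e/2, M+\e)$. The paper omits this proof, citing it as an easy adaptation of \cite[Theorem 2.11]{BGG}, and your manoeuvre is exactly the device used elsewhere in the paper (e.g.\ the set $D=\ol{C+3\e/4\, B(X)}$ in the proof of Theorem~\ref{thm1}), so you have simply supplied the details the authors chose to skip.
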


\begin{theorem} \label{thm4}
If $(X, \|\cdot\|)$ has UMIP, then there is a dense $G_\del$ subset $\B_0$ of $\B$ such that $(X, \|\cdot\|_B)$ has UMIP for each $B \in \B_0$.
\end{theorem}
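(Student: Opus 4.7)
The plan is to closely parallel the proof of Theorem \ref{thm2}, replacing the hyperplane-based data $(A, f, Z)$ used there by point-distance data $(A, p)$, and using the reformulation of UMIP given in the preceding lemma together with the quantitative comparisons supplied by Lemma \ref{l4}. For $n, m, k \in \N$ with $m \geq 2$ and $0 < \alpha < 1/(2n)$, I would introduce the auxiliary class $\A_{\alpha, n, m, k}$ consisting of pairs $(A, p)$ such that $A \subseteq X$ is closed convex with $diam(A) \leq m$ and $d(p, A) \geq 1/n - \alpha$, together with the witness condition that some closed ball $z_0 + r_0 B(X)$ of radius $r_0 \leq k$ contains $A$ and satisfies $d(p, z_0 + r_0 B(X)) \geq 1/n - \alpha$. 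Analogously, $\B_{n, m, k}$ would collect those $B \in \B$ for which there exist parameters $0 < \alpha, \gamma < 1/(2n)$ and $K > 0$ such that whenever $(A, p) \in \A_{\alpha, n, m, k}$, there is a closed ball $z_1 + r_1 B$ of $B$-radius at most $K$ containing $A$ and satisfying $d_B(p, z_1 + r_1 B) \geq \gamma$. I would then set $\B_0 := \bigcap_{n, m, k} \B_{n, m, k}$.

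The first verification is that each $B \in \B_0$ has UMIP via Lemma \ref{l4}: given $\e > 0$ and $M \geq 2$ for the $B$-norm, the comparison estimates produce corresponding $\e', M'$ for the original norm, to which I apply UMIP of $(X, \|\cdot\|)$ to obtain a ball-radius bound $K_0$; choosing $n$ with $1/n < \e'/2$, $m \geq M'$, and $k$ large enough to absorb $K_0$ translated back by Lemma \ref{l4}, the $\B_{n, m, k}$-clause for $B$ then delivers a $B$-ball witness of bounded radius with separation $\geq \gamma$. Openness of $\B_{n, m, k}$ is proved exactly as in Claim 2 of Theorem \ref{thm2}: for $B$ close to some $B_0 \in \B_{n, m, k}$ in Hausdorff distance, any witnessing ball $z_1 + r_1 B_0$ can be enlarged slightly to a $B$-ball of nearly the same centre with only a small loss in the separation $d_B(p, \cdot)$. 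Density follows the Claim 3 pattern: given $B_0 \in \B$ and $\e > 0$, the fattened ball $B := \ol{B_0 + \del B(X)}$ with $\del$ small keeps $B$ within $\e$ of $B_0$, and any witness $z_0 + r_0 B(X)$ for $(A, p) \in \A_{\alpha, n, m, k}$ can be rewritten as a $B$-ball of controlled radius by substituting $B(X) \subseteq (1/\del) B$ and re-centering, while tracking the effect on $d_B(p, \cdot)$.

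The main obstacle lies in the density step: unlike the AHUMIP setting where the separating functional $f$ could be controlled linearly, here the distance $d(p, \cdot)$ is a full-norm quantity, so converting the original-norm witness into a $B$-norm witness while preserving a quantitative lower bound on $d_B(p, \cdot)$ requires choosing the radii and centres so that the $\del$-fattening does not push the new ball too close to $p$. Once the right arithmetic is chosen, the same bookkeeping as in the AHUMIP proof carries through, and together with Lemma \ref{l4} the three claims combine exactly as in Theorem \ref{thm2} to yield $\B_0$ as a dense $G_\del$ subset of $\B$ such that each $B \in \B_0$ gives a norm with UMIP.
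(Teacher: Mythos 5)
Your overall architecture is the same as the paper's: quantified classes $\B_{n,k}$ of unit balls admitting uniform ball witnesses for a fixed family of convex sets, openness and density of each class, and intersection to get $\B_0$. Two remarks on the set-up. First, the paper exploits translation invariance of UMIP to normalise the separated point to $p=0$, so its class $\A_{n,k}$ consists of sets alone ($d(0,A)\geq 1/n$, $diam(A)\leq k$) with no point $p$ and no witness clause; your three-index version with pairs $(A,p)$ is workable but carries unnecessary baggage, and your requirement $d(p,z_0+r_0B(X))\geq 1/n-\alpha$ in the witness clause of $\A_{\alpha,n,m,k}$ is stronger than what the UMIP reformulation lemma actually delivers (it only guarantees some $0<\delta\leq\e$, not a separation within $\alpha$ of $\e$), so verifying membership in your $\A$ class during Claim~1 may fail as stated.

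Second, and more importantly, you correctly identify the density step as the one genuinely new point --- the paper's own remark says the proof ``is not exactly a simple adaptation'' of the AHUMIP case for precisely this reason --- but you leave it at ``once the right arithmetic is chosen, the bookkeeping carries through,'' which is where the missing idea sits. The resolution is to linearise the point separation: after translating so that $p=0$, the original-norm witness $z_0+r_0B(X)$ with $\|z_0\|\geq r_0+\delta$ admits $f\in S(X^*)$ with $f(z_0)$ close to $\|z_0\|$, hence $\inf f(z_0+r_0B(X))\geq\delta-$ (small). One then re-centres exactly as in Claim~3 of Theorem~\ref{thm2}: with $B=\ol{B_0+\del B(X)}$, pick $u\in B_0$ nearly minimising $f$ on $B_0$, set $r_1=r_0/\del$ and $z_1=z_0-r_1u$, so that $z_1+r_1B\supseteq z_0+r_0B(X)\supseteq A$ and $\inf f(z_1+r_1B)=f(z_0)-r_1f(u)-r_1\|f\|_B^*$ stays bounded below; finally $d_B(0,z_1+r_1B)\geq\inf f(z_1+r_1B)/\|f\|_B^*>0$ recovers the point separation. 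Without this reduction to a supporting functional, ``re-centering while tracking the effect on $d_B(p,\cdot)$'' has no mechanism behind it: naively enlarging $z_0+r_0B(X)$ to a $B$-ball of radius $r_0/\del$ about the same centre swallows the point $0$. With this step supplied, your plan coincides with the paper's proof.
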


\begin{proof}
For $n, k, \in \mathbb{N}$, $k \geq 2$ let
\beqa
\mathcal{A}_{n, k} & := & \{A \subseteq X : A \mbox{ is closed, convex and bounded}, \, d(0, A) \geq 1/n, \\
&& diam(A) \leq k \} \mbox{ and }\\
\mathcal{B}_{n, k} & := & \{B \in \mathcal{B} : \mbox{ there exist } 0< \gamma < 1/n, \, K > 0 \mbox{ such that for every}\\
&& A\in \mathcal{A}_{n, k}, \mbox{ we have } z_0\in X, \, r\leq K, \mbox{ so that } A \subseteq z_0 +rB, \\
&& d_B(0, z_0 +rB) >\gamma\}.
\eeqa

Then, again as in Theorem ~\ref{thm2}, we can show that
\[
\mathcal{B}_0 = \bigcap_{n=1}^{\infty} \bigcap_{k=1}^{\infty} \mathcal{B}_{n, k}
\]
is a dense $G_\del$ set in $\mathcal{B}$ and each element of $\B_0$ has UMIP.
\end{proof}

\begin{remark} \rm
The proof here is not exactly a simple adaptation of the proof for AHUMIP case. But it still is not difficult and can be omitted.
\end{remark}

\section{Two open problems}

\subsection{Residuality of Fr\'echet smooth norms}

Following theorem characterises smoothness and Fr\'echet smoothness in terms of extreme and w*-denting points of $B(X^*)$ respectively.

\begin{theorem} \label{TF1}
\bla
\item $X$ is smooth if and only if every $f \in NA$ is an extreme point of $B(X^*)$.
\item $X$ is Fr\'echet smooth if and only if every $f \in NA$ is a w*-denting point of $B(X^*)$.
\el
\end{theorem}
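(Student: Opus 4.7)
My approach to both parts is to exploit the classical duality between smoothness at a point $x \in S(X)$ and the geometry of the w*-compact set $B(X^*)$ near the elements of $D(x)$. The central analytic tool for (b) is Smulyan's criterion: the norm is Fr\'echet differentiable at $x \in S(X)$ if and only if $D(x) = \{f\}$ is a singleton and $\mathrm{diam}(S(B(X^*), x, 1-\delta)) \to 0$ as $\delta \to 0^+$. I will also use the standard fact that every w*-denting point of $B(X^*)$ is extreme.

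Part (a) is direct. For the forward direction, let $f \in D(x) \subseteq NA$; if $f = (g+h)/2$ with $g, h \in B(X^*)$, the equality $1 = f(x) = (g(x)+h(x))/2$ forces $g(x) = h(x) = 1$, so $g, h \in D(x)$. Smoothness gives $D(x) = \{f\}$, hence $g = h = f$, and $f$ is extreme. For the converse, if $f, g \in D(x)$, then $(f+g)/2 \in D(x) \subseteq NA$ is extreme by hypothesis, which forces $f = g$, so $D(x)$ is a singleton.

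For (b), the forward direction is immediate: any $f \in NA$ belongs to $D(x)$ for some $x \in S(X)$, and the slices $S(B(X^*), x, 1-\delta)$ all contain $f$ and have vanishing diameter by Smulyan, so $f$ is w*-denting. The main obstacle is the converse, because w*-denting only supplies \emph{some} small w*-slice around $f$, whereas Fr\'echet differentiability demands that the specific w*-slices exposed by the norming point $x$ be small. My plan is as follows. Since w*-denting implies extreme, part (a) yields that $X$ is smooth, so $D(x) = \{f_x\}$ is a singleton for every $x \in S(X)$. Fix $x$, set $f := f_x \in NA$, and given $\varepsilon > 0$ use w*-denting of $f$ to pick a w*-slice $S_0 = S(B(X^*), y, \alpha)$ with $f \in S_0$ and $\mathrm{diam}(S_0) < \varepsilon$. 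The sets $L_\delta := \{h \in B(X^*) : h(x) \geq 1-\delta\}$ form a decreasing family of w*-compact subsets of $B(X^*)$ with $\bigcap_{\delta > 0} L_\delta = D(x) = \{f\} \subseteq S_0$. Applying the finite intersection property to the w*-compact sets $L_\delta \cap (B(X^*) \setminus S_0)$, some $L_\delta$ must already sit inside $S_0$, which gives $\mathrm{diam}(S(B(X^*), x, 1-\delta)) \leq \mathrm{diam}(L_\delta) \leq \mathrm{diam}(S_0) < \varepsilon$. Smulyan's criterion then delivers Fr\'echet differentiability at $x$, completing the proof.
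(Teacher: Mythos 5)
Your proof is correct. Part (a) and the forward direction of (b) coincide with the paper's argument (Fr\'echet differentiability at $x$ making $f$ w*-strongly exposed by $x$ is exactly your Smulyan statement). The interesting divergence is in the converse of (b), where the paper and you bridge the same gap --- w*-dentability only gives \emph{some} small w*-slice around $f$, not one determined by the norming point $x$ --- in different ways. The paper argues via continuity of the duality mapping: smoothness from part (a) makes $D$ single-valued and norm-to-w* continuous, each $D(x)$ is a w*-point of continuity of $B(X^*)$ because it is w*-denting, and composing these gives norm-to-norm continuity of $D$, which characterises Fr\'echet smoothness. You instead run a direct w*-compactness argument: the sets $L_\delta = \{h \in B(X^*) : h(x) \geq 1-\delta\}$ are w*-compact, decrease to $D(x) = \{f\}$, and since $f$ lies in the given small relatively w*-open slice $S_0$, the finite intersection property forces $L_\delta \subseteq S_0$ for some $\delta$, after which Smulyan's criterion applies. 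Both are sound; the paper's route is shorter if one is willing to quote the w*-PC notion and the norm-norm continuity characterisation of Fr\'echet smoothness, while yours is more self-contained, needing only Smulyan's lemma and elementary w*-compactness, and it makes explicit the slice-upgrading step that the paper's appeal to w*-PC conceals.
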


\begin{proof}
$(a)$: Let $X$ be smooth and $f \in NA$. Let $x \in S(X)$ be such that $f \in D(x)$. Since $D(x)$ is a singleton face of $B(X^*)$, $f$ is an extreme point.

Conversely, suppose every element of $NA$ is extreme. Let $x \in S(X)$ and $g, h \in D(x)$. Then, $\frac{(h+g)(x)}{2}=1$. Thus, $\frac{f+g}{2} \in D(x) \subseteq NA$. Therefore, $\frac{f+g}{2}$ is an extreme point. Hence, $f=g = h$. So, $X$ is smooth.

$(b)$: Suppose $X$ is Fr\'echet smooth. Let $f \in NA$. Let $x \in S(X)$ be such that $f \in D(x)$. Since $x$ is a point of Fr\'echet differentiability, $f$ is w*-strongly exposed point, and hence, a w*-denting point of $B(X^*)$.

Conversely, let all $f \in NA$ be w*-denting. So, every $f \in NA$ is extreme. Thus, by part $(a)$, $X$ is smooth. Hence, the duality map $D$ is single-valued and norm-w* continuous. But, each $f \in NA$ is w*-denting, and hence, w*-PC. Hence, the duality map is norm-norm continuous. So $X$ is Fr\'echet smooth.
\end{proof}

Now, we obtain a ball separation characterisation of Fr\'echet smoothness using the characterisation of w*-denting points in \cite[Corollary 1.6]{CL} and Theorem ~\ref{TF1}.

\begin{theorem}
$X$ is Fr\'echet smooth if and only if for every closed bounded set $C$ and $f \in NA$ satisfying $\inf f(C) >0$, we have $z_0 \in X$ and $r_0 >0$ such that $C \subseteq z_0 +r_0 B(X)$ and $\inf f(z_0 + r_0 B(X)) >0$.
\end{theorem}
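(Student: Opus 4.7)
The plan is to derive this theorem as a direct consequence of Theorem~\ref{TF1}(b) combined with the ball separation characterization of w*-denting points given in \cite[Corollary 1.6]{CL}, with only a small preliminary reduction to the convex case.

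First I would reduce to closed bounded \emph{convex} $C$. Since $f$ is linear and continuous, $\inf f(C)$ equals the infimum of $f$ on the closed convex hull of $C$, and a closed ball contains $C$ if and only if it contains the closed convex hull of $C$. Hence both sides of the desired equivalence are unchanged if we restrict attention to closed, bounded, convex sets $C$. I would also note that by the definition of $NA$ given in the preliminaries, $NA \subseteq S(X^*)$, so every $f$ appearing in the statement already has $\|f\| = 1$; no further normalization is required.

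With this reduction, the proof is a concatenation of two equivalences. By Theorem~\ref{TF1}(b), $X$ is Fr\'echet smooth if and only if every $f \in NA$ is a w*-denting point of $B(X^*)$. By \cite[Corollary 1.6]{CL}, $f \in S(X^*)$ is a w*-denting point of $B(X^*)$ if and only if, for every closed, bounded, convex $C \subseteq X$ with $\inf f(C) > 0$, there exist $z_0 \in X$ and $r_0 > 0$ such that $C \subseteq z_0 + r_0 B(X)$ and $\inf f(z_0 + r_0 B(X)) > 0$. Chaining these two equivalences over all $f \in NA$ delivers the theorem.

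The only real care needed is matching the precise form of \cite[Corollary 1.6]{CL} to the qualitative statement here. If the cited characterization is phrased with explicit $\e$-dependence (e.g.\ $\inf f(B[z_0, r_0]) \geq \e/2$ given $\inf f(C) \geq \e$), a straightforward rescaling of $C$ and of the radius converts it into the stated qualitative positivity, and conversely the qualitative form can be upgraded to the quantitative one by applying it to the sets $\{x \in C : f(x) \geq \e\}$. I do not anticipate any substantive obstacle beyond this bookkeeping.
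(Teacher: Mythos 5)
Your proposal is correct and follows exactly the route the paper intends: the paper gives no written proof, stating only that the theorem follows by combining Theorem~\ref{TF1}(b) with the characterisation of w*-denting points in \cite[Corollary 1.6]{CL}, which is precisely your chaining argument. Your preliminary reduction to closed convex hulls and the observation that $NA \subseteq S(X^*)$ are the right (and only) pieces of bookkeeping needed to make that deduction explicit.
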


Residuality of Fr\'echet smooth norms seems to be open. Now, considering the connection between residuality and ball separation properties, we can ask the following:

\begin{question}
If $X$ has a Fr\'echet smooth norm, then is the set of equivalent norms which are Fr\'echet smooth residual in $\B$?
\end{question}

If the answer to the above question is affirmative, then using the residuality of LUR norms, it's easy to answer the following open problem:
\begin{question}
If $X$ has an equivalent Fr\'echet smooth norm and an equivalent LUR norm, then does there exist an equivalent norm which is both Fr\'echet smooth and LUR?
\end{question}

\subsection{Uniformly convex renorming and UMIP}
Another long-standing open question is:
\begin{question} \label{q3}
Does UMIP imply super-reflexivity?
\end{question}
This has been posed as an open question in \cite{GMZ}. Now, consider the following isometric problem:
\begin{question} \label{q4}
If $X$ has both H-MIP and UMIP, does $X$ have H-UMIP?
\end{question}
We observe that if the answer to Question~\ref{q4} is affirmative, then Question~\ref{q3} also has a positive answer in separable Banach spaces.

It is well known that separable MIP spaces have separable dual \cite[Theorem 2.1]{GGS}, and hence, is Asplund. It follows that $X^*$ has a dual LUR renorming. This, in turn, implies that $X$ has H-MIP. It follows that a separable MIP space also has an equivalent H-MIP renorming.

Now, from \cite[Corollary 5.7]{CL} it follows that H-MIP is residual. And Theorem~\ref{thm4} shows that so is UMIP. Thus, any separable UMIP space has an equivalent norm with both UMIP and H-MIP. Therefore, if the answer to Question~\ref{q4} is affirmative, there is an equivalent norm on $X$ that has H-UMIP, and hence, is uniformly smooth. That is, Question~\ref{q3} also has a positive answer for separable spaces.
\newpage

\end{document}